\def\ps@pprintTitle{%
\let\@oddhead\@empty
\let\@evenhead\@empty
\def\@oddfoot{}%
\let\@evenfoot\@oddfoot}
\newtheorem{theorem}{Theorem}%[section]
\newtheorem{cor}[theorem]{Corollary}
\newtheorem{lemma}[theorem]{Lemma}
\newtheorem{prop}[theorem]{Proposition}
\numberwithin{equation}{section}
\newtheorem{conj}[theorem]{Conjecture}
\newproof{proof}{Proof}
\newcommand{\abs}[1]{\left\vert#1\right\vert}
\newcommand{\Osq}{\hspace{1mm}\square\hspace{1mm}}
\begin{document}

\title{Computing Bounds on Product-Graph Pebbling Numbers}%
%\title{Specialized integer-programming methods for upper bounds on pebbling numbers of Cartesian-product graphs  }%

\author[1]{Franklin Kenter \fnref{fn1}}
\ead{kenter@usna.edu}
\author[1]{Daphne Skipper \corref{cor1}
   \fnref{fn2}}
\ead{skipper@usna.edu}
\author[2]{Dan Wilson}
\ead{dxwils3@gmail.com}

\cortext[cor1]{Corresponding author}
\fntext[fn1]{F. Kenter was partially funded by ONR grant N00014-18-W-X00709 and NSF Grant DMS-1719894.}
\fntext[fn2]{D. Skipper was partially funded by ONR grant N00014-18-W-X00709.}

\address[1]{United States Naval Academy, 572C Holloway Road, Annapolis, MD 21409}
\address[2]{CenturyLink, Monroe, Louisiana, 71203}

%\author{Franklin Kenter, Daphne Skipper, Dan Wilson}%
%
%
%
%\institute{U.S. Naval Academy, Annapolis MD 21402, USA\\
%\email{\{kenter,skipper$^\text{\Letter}$\}@usna.edu}\\
%\url{https://www.usna.edu/MathDept/}}

% ----------------------------------------------------------------

\begin{abstract}
Given a distribution of pebbles to the vertices of a graph, a pebbling move removes two pebbles from a single vertex and places a single pebble on an adjacent vertex. The pebbling number $\pi(G)$ is the smallest number
such that, for any distribution of $\pi(G)$ pebbles to the vertices of $G$ and choice of root vertex $r$ of $G$, 
there exists a sequence of pebbling moves that places a pebble on $r$.
Computing $\pi(G)$ is provably difficult, and recent methods for bounding $\pi(G)$ have proved computationally intractable, even for moderately sized graphs.

% The pebbling number of a graph is the smallest number $\pi(G)$ such that if $\pi(G)$ pebbles are distributed to the vertices of a graph in any configuration, there exists a sequence of pebbling moves that places a pebble on any choice of root vertex.

%regardless of the choice of root vertex $r$, for any distribution of $\pi(G)$ pebbles, 

% Graph pebbling (Chung, 1989) is a two-player game on a graph $G$.  Player one distributes ``pebbles'' to vertices and designates a root vertex.  Player two attempts to move a pebble to the root vertex via a sequence of pebbling moves, in which two pebbles are removed from one vertex in order to place a single pebble on an adjacent vertex. The pebbling number of a simple graph $G$  is the smallest number $\pi(G)$ such that if player one distributes $\pi(G)$ pebbles in \emph{any} configuration, player two can always win.  Computing $\pi(G)$ is provably difficult, and recent methods for bounding $\pi(G)$ have proved computationally intractable, even for moderately sized graphs. 

Graham conjectured that $\pi(G \Osq H) \leq \pi(G) \pi(H)$, where $G \Osq H$ is the Cartesian product of $G$ and $H$ (1989).
While the conjecture has been verified for specific families of graphs, in general it remains open. 
This study combines the focus of developing a computationally tractable, IP-based method for generating good bounds on $\pi(G \Osq H)$, with the goal of shedding light on Graham's conjecture.We provide computational results for a variety of Cartesian-product graphs, including some that are known to satisfy Graham's conjecture and some that are not.  Our approach leads to a sizable improvement on the best known bound for $\pi(L \Osq L)$, where $L$ is the Lemke graph, and $L\Osq L$ is among the smallest known potential counterexamples to Graham's conjecture.
\end{abstract}

\begin{keyword}
graph pebbling \sep Graham's conjecture \sep Lemke graph \sep partial pebbling
\end{keyword}

\maketitle

% ----------------------------------------------------------------

\section{Introduction}

Graph pebbling, first introduced by Chung in 1989 \cite{chung1989pebbling}, can be described as a two-person game.  Given a connected graph, $G$, the adversary chooses a root vertex $r$ and an allocation of pebbles to vertices.  In a pebbling move,  player two chooses two pebbles at the same vertex, moves one to an adjacent vertex, and removes the other.  Player two wins if she finds a sequence of pebbling moves that results in a pebble at the root vertex $r$.
The { pebbling number} of graph $G$, denoted $\pi(G)$, represents the fewest number of pebbles such that, regardless of the  initial configuration and root given by the adversary, player two has a winning strategy.  

The original motivation for graph pebbling was to solve the following number-theoretic problem posed by Erd\H os and Lemke \cite{lemke1989addition}: ``For any set of $n$ integers, is there always a subset $S$ whose sum is $0\hspace{-.1 cm}\mod n$, and for which $\sum_{s \in S} \gcd(s,n) \le n$?'' %{\color{blue} (What is d?)} 
Kleitman and Lemke \cite{lemke1989addition} answered this question in the affirmative, and Chung \cite{chung1989pebbling} translated their technique into graph pebbling.  Since then, the study of graph pebbling has proliferated in its own right, inspiring many applications and variations; for an overview see \cite{gross2013handbook}. The translation of the number-theoretic problem to graph pebbling is nontrivial;  the reader is referred to \cite{elledge2005application} for details.

\vspace{12pt}

\noindent {\bf Graham's Conjecture:} This study is strongly motivated by  famous open questions in pebbling regarding the Cartesian-product (or simply, ``product'') of two graphs, 
$G \Osq H$:

\begin{conj}[Graham {\normalfont\cite{chung1989pebbling}}] Given connected graphs $G$ and $H$,
\label{conjBox}
 \[\pi(G \Osq H)\le \pi(G)\pi(H).\]
\end{conj}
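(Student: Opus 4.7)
The plan is to attempt the natural ``project-and-solve'' induction on the product structure. Given a configuration of $\pi(G)\pi(H)$ pebbles on $G \Osq H$ with root $(r_G, r_H)$, I would index the $|V(H)|$ copies of $G$ by vertices of $H$; call these \emph{columns}. First I would classify columns as \emph{heavy} (at least $\pi(G)$ pebbles, so by definition of $\pi(G)$ a single pebble can be solved to the vertex $(r_G, v)$ in that column) or \emph{light}. If sufficiently many columns are heavy, one converts the original configuration into a configuration on the single copy of $H$ lying on row $\{r_G\}\times H$ and invokes the definition of $\pi(H)$ to finish.

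A naive implementation of this strategy fails for two reasons: pebbles in light columns are stranded unless they can be combined with pebbles from adjacent columns, and solving a heavy column may consume its entire budget so that no pebbles are left to contribute to other columns or to the $H$-row. To refine the attempt I would introduce a weight potential in the spirit of Chung, assigning weight $2^{-d_G(u,r_G)-d_H(v,r_H)}$ to a pebble at $(u,v)$, and work with $q$-pebbling (where $q$ pebbles must be delivered to the root) column-by-column so that partial contributions from light columns can be aggregated into the $H$-row. The hope is to prove the inequality in two stages: show that any column with weight $w$ (with respect to its own root $(r_G,v)$) delivers a pebble to $(r_G,v)$ whenever $w$ exceeds a threshold controlled by $\pi(G)$, and then show that the induced configuration on the $H$-row has weight exceeding the threshold controlled by $\pi(H)$.

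The main obstacle, and the reason this conjecture has resisted proof for over three decades, is that pebbling moves are not ``separable'' across the product. A pebble at $(u,v)$ can participate in solving strategies for many different columns or rows depending on the global layout of the other pebbles, so any scheme that commits pebbles to a column before looking at the rows risks severe double-counting or under-utilization. Weight-function bookkeeping of the type sketched above is known to lose a multiplicative factor at each projection step, and closing that factor seems to require either a globally tight LP-style weight assignment that is simultaneously extremal for every column and every row, or a structural reduction (perhaps thorough thorough ``pebbling-number-preserving'' contractions) that bypasses the allocation step entirely. I would not expect the sketch above to settle Conjecture~\ref{conjBox}; rather, pinpointing where the weight accounting becomes slack should both recover the known special cases and suggest the families of configurations against which the IP-based bounds developed in the remainder of this paper are most informative, in particular for the candidate counterexample $L\Osq L$.
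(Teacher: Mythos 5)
There is nothing here to verify as a proof, and you are candid about that yourself: the statement is Graham's Conjecture, which the paper records as an open problem (Conjecture~\ref{conjBox}) and does not prove. The paper's entire contribution is premised on the conjecture being unresolved --- it develops IP-based \emph{upper bounds} on $\pi(G \Osq H)$ for specific products precisely because no general argument is known, and it highlights $L \Osq L$ as a candidate counterexample. So the correct comparison is not between two proofs but between your sketch and the known partial results the paper cites.

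The concrete gap in your sketch is the one you name: the project-and-solve induction loses a multiplicative factor when light columns are discarded or double-counted, and nobody knows how to close it in general. Two points are worth sharpening. First, the standard device for rescuing the light columns is the \emph{2-pebbling property}: if a column has support $s$ and roughly $2\pi(G)-s+1$ pebbles, it can deliver \emph{two} pebbles to $(r_G,v)$, and this discount is exactly what lets Chung-style arguments aggregate partial contributions across columns. This is why every known special case (paths, trees, cycles, complete graphs, and the results of Snevily--Foster and Wang cited in the introduction) assumes at least one factor has the 2-pebbling property, and why the Lemke graph --- which lacks it --- is where the approach breaks. Your weight-potential variant runs into the same wall: the weight function certifies solvability column-by-column but cannot certify the \emph{joint} solvability needed for the row step without that discount. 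Second, the best general bound obtainable by an argument of the shape you describe is $\pi(G \Osq H) \le \pi(G)\,(\pi(H) + |V(H)|) \le 2\,\pi(G)\pi(H)$ (Asplund--Hurlbert--Kenter, cited in the paper), which is exactly the factor-of-two loss you predict. Until someone either closes that factor or exhibits a counterexample (the paper's bound $\pi(L \Osq L) \le 85$ versus the conjectured $64$ leaves both possibilities open), a proposal along these lines cannot be completed.
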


\noindent Graham's conjecture has been resolved for specific families of graphs including products of paths \cite{chung1989pebbling}, products of cycles \cite{herscovici2003graham,snevily2002}, products of trees \cite{snevily2002}, and products of fan and wheel graphs \cite{feng2002pebbling}.  It was also proved for specific products in which one of the graphs has the so-called 2-pebbling property \cite{chung1989pebbling,snevily2002,wang2009graham}.  

One of the major hurdles in tackling Graham's conjecture is the lack of tractable computational tools.  Milans and Clark \cite{pi2pclark} showed that the decision problem of determining whether $\pi(G) < k$ is ${\Pi_2^P}$-complete.  Numerically verifying Graham's conjecture for specific graphs has been extremely difficult; as a result, there does not appear to be a discussion, let alone a consensus, regarding whether or not the conjecture is true.
 
A more practical intermediate goal is to improve the bounds on the pebbling numbers of product graphs in general, and in special cases.   To this end, Auspland, Hurlbert, and Kenter  \cite{AHK2017} proved that $\pi(G \Osq H) \le \pi(G)\ (\pi(H)+ \abs{V(H)})$. 
Since $\pi(H) \geq \abs{V(H)}$ (the adversary wins by placing a single pebble on each vertex in $V(H)\backslash r$), this result gets within a factor of two of Graham's conjecture: $\pi(G \Osq H)\le 2 \pi(G)\pi(H)$.  

When seeking a counterexample to Graham's conjecture, it is natural to focus on small graphs that do not possess the 2-pebbling property.  The { Lemke graph}, $L$, shown in Figure \ref{fig.L}, was the first graph of this kind to be discovered \cite{chung1989pebbling}. Since then, infinite families of examples have been constructed  \cite{wang2001pebbling}, but the Lemke graph, with $|L| = \pi(L) = 8$, is still among the smallest;  it was verified in \cite{newlemke} that every graph with seven or fewer vertices has the 2-pebbling property.  As suggested in \cite{gross2013handbook}, $L \Osq L$ is a potential counterexample to Graham's conjecture, and would be among the smallest.  Gao and Yin proved Graham's conjecture for $L \Osq K_n$, where $K_n$ is the complete graph on $n$ vertices, and $L \Osq T$, where $T$ is a tree \cite{gao2017lemke}.  However, there has been little progress on even bounding the product of two graphs where neither has the 2-pebbling property. 

\begin{figure}
%\centering  \includegraphics[width = .5\textwidth]{lemkeredo.png}
\centering
\begin{tikzpicture}
%% vertices
\draw[fill=black] (0,0) circle (3pt);
\draw[fill=black] (2,0) circle (3pt);
\draw[fill=black] (2,2) circle (3pt);
\draw[fill=black] (2,-2) circle (3pt);
\draw[fill=black] (4,1) circle (3pt);
\draw[fill=black] (4,-1) circle (3pt);
\draw[fill=black] (6,1) circle (3pt);
\draw[fill=black] (6,-1) circle (3pt);
%% vertex labels
\node at (-0.4,0) {$v_8$};
\node at (1.6,2) {$v_5$};
\node at (2,0.3) {$v_6$};
\node at (1.6,-2) {$v_7$};
\node at (4,1.4) {$v_3$};
\node at (4,-1.4) {$v_4$};
\node at (6,1.4) {$v_1$};
\node at (6,-1.4) {$v_2$};
%%% edges
\draw[thick] (0,0) -- (2,0) -- (4,1) -- (6,1) -- (6,-1) -- (4,-1) -- (2,-2) -- (0,0);
\draw[thick] (0,0) -- (2,2) -- (4,1);
\draw[thick] (4,-1) -- (2,0);
\draw[thick] (2,2) -- (4,-1) -- (0,0);
\draw[thick] (2,-2) -- (4,1) ;
\end{tikzpicture}

\caption{The Lemke Graph, $L$: a minimum-sized graph without the 2-pebbling property.}
\label{fig.L}
\end{figure}
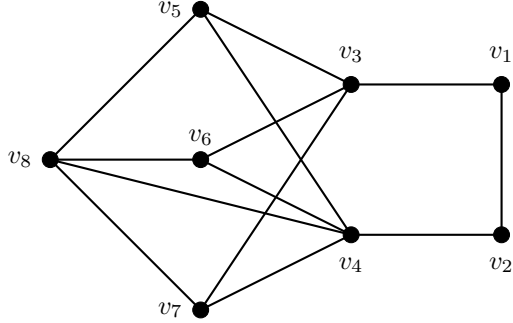
%When seeking a counterexample to Graham's conjecture, it is natural to focus on small graphs that do not possess the 2-pebbling property.  Lemke \cite{chung1989pebbling} presented a small (8-vertex) example of a graph that does not have the 2-pebbling property.  The \emph{Lemke graph}, $L$, shown in Figure \ref{fig.L} has $\pi(L) = 8$.  Since then, infinite families of graphs not possessing the 2-pebbling property have been constructed  \cite{wang2001pebbling}. Only very recently has it been verified that $L$ is among the smallest such graphs \cite{newlemke}.  Hurlbert suggested in \cite{gross2013handbook} that $L \Osq L = L \Osq L$ could be a counterexample to Graham's conjecture.  

\vspace{12pt}

\noindent {\bf Pebbling with IPs:}  Integer programs (IPs) have been applied to pebbling before, mostly in the context of product graphs.  For example, \cite{HA1998} uses an IP to reproduce $\pi(C_5 \Osq C_5)$.  
In \cite{linopt2011}, Hurlbert introduces an IP to bound $\pi(L \Osq L)$.  \cite{pip2017modified} extends Hurlbert's method to bound $|E(G)|$, where $G$ is a Class-0 graph ($\pi(G) = |G|$).

We provide a brief description of Hurlbert's contribution for context, since our work also aims at bounding $\pi(L \Osq L)$.  Hurlbert's method relies on the fact that the pebbling number of a spanning tree of a graph provides an upper bound on the pebbling number of the graph.    His model provides strong evidence that $\pi(L \Osq L) \le 108$, which is a considerable improvement on the previous bound of $\pi(L \Osq L) \leq 2(\pi(L))^2 = 128$, but is still quite far from Graham's conjectured bound of $\pi(L \Osq L) \le (\pi(L))^2 = 8^2 = 64$.  
Unfortunately, Hurlbert's technique does not scale well to $L \Osq L$, which has 64 vertices, 208 edges, and more than $10^{50}$ spanning trees.  In his full model, every  subtree corresponds to a constraint, so even writing the full model is not an option.  Still, Hurlbert makes progress by carefully selecting a subset of subtrees to translate into constraints.  

Another computational challenge is that, to fully vet a potential upper bound on $\pi(G)$, the bound must be verified for all possible roots.  Hurlbert restricted his search to root candidates that are the most likely to have large root-specific pebbling numbers, but did not verify the bound by exhausting all 64 choices of $r \in V(L \Osq L)$.

\vspace{12pt}

\noindent {\bf Partial-pebbling IP model:}  We present a novel IP approach to bounding $\pi(G \Osq H)$ that leverages the symmetry inherent in product graphs via partial-pebbling (see Section \ref{sec.partialpeb}).  In \cite{COCOApebbling}, a similar method improves the bound on $\pi(L \Osq L)$ from 108 to 91; here we extend those ideas in three significant directions and obtain $\pi(L \Osq L) \leq 85.$\footnote{These bounds have not been theoretically verified by an exact rational IP solver; rather, we use { Gurobi} which uses floating-point arithmetic.}  The first improvement, in which we we extend the partial-pebbling approach so that $G$ and $H$ play symmetric roles in the model, accounts for most of the bound improvement. 

Unlike previous IP approaches to pebbling, we directly incorporate the notion of ``2-pebbling'' (i.e., pebbling two or more pebbles at a time for a reduced cost).  Intuitively, this seems to be a critical ingredient for improving bounds on $\pi(G \Osq H)$, considering the importance of the ``2-pebbling property'' in previous results.  In fact, the inclusion of 2-pebbling greatly improved our bound on $\pi(L \Osq L)$.
In our second extension to \cite{COCOApebbling}, we generalize the modeling of 2-pebbling so that the IP applies to any pair of graphs $G$ and $H$ for which the pebbling numbers and 2-pebbling tables are known, with no special graph-specific modifications required.  

Symmetric treatment of $G$ and $H$ require modeling at the full level of granularity of $G \Osq H$, even though our constraints mostly model at the level of $G$ or $H$.  Therefore, in order to bound $\pi(G \Osq H)$, we must bound the root-specific pebbling numbers for each choice of $|V(G \Osq H)|$ root nodes.  In our third extension to \cite{COCOApebbling}, we embed the IP in an algorithm that finds the overall bound of $\pi(G \Osq H)$ without exhaustively finding the best possible bound for every choice of root node.

%The IP solver that we use for our computations, \texttt{Gurobi}  \cite{gurobi}, solves all of our models to integer optimality within a couple of seconds, so there is reason to hope that our approach may be scalable to even larger product graphs. It is also likely that our modestly-sized models can be solved by an exact rational IP solver such as \cite{exactsolver}, to theoretically validate our bounds.

In Section \ref{sec.pre}, we give an overview of pebbling and introduce partial-pebbling.  In Section \ref{sec.model}, we develop an IP model based on partial-pebbling to bound $\pi(G \Osq H)$.  In Section \ref{sec.algorithm}, we present an algorithm that uses the IP model to efficiently search all root nodes for the overall bound on the pebbling number. In Section \ref{sec:computations}, we provide computational results for a variety of product graphs, including $\pi(L \Osq L)$.  In Section \ref{sec.conclusion}, we make some closing observations and discuss future directions.

\section{Graph pebbling} \label{sec.pre}

In this section, we set the stage by introducing the graph-theoretic notation that we use, as well as concepts and notation from graph pebbling.  For a more detailed presentation of graph pebbling, see \cite{gross2013handbook}.  We also introduce partial-pebbling, which serves as the foundation for our IP model.

Throughout, we assume that our graphs are simple, undirected, and connected.   We use the notation $G := (V(G), E(G))$, to indicate a graph with vertex set $V(G)$ and edge set $E(G)$. For simplicity, we use $|G| := |V(G)|$ to denote the vertex-count of $G$, and $V(G) := \{1, 2, \dots, |G|\}$ to denote its vertex set.  Further, $i \sim_G j$ indicates that $\{i, j\} \in E(G)$, and $D_G(i,j)$ represents the graph theoretic distance between vertices $i$ and $j$ in $G$. 
%If the context is clear, we simply write $i \sim j$ or $D(i,j)$, respectively.  
The diameter of a graph is the maximum distance between any pair of its vertices. We use $\Delta_G$ to denote the maximum degree of $G$.

The Cartesian-product (also called the box-product, weak-product, or xor product) graph of $G$ and $H$, denoted $G\Osq H$ has vertex set $V(G \Osq H) := V(G) \times V(H) = \{(i,j): i \in \{1,2,\dots,|G|\}, j \in  \{1,2,\dots,|H|\}\}$.  For edges of $G \Osq H$, we have $(g, h) \sim_{G \Osq H} (g', h')$ if $g = g'$ and $h \sim_H h'$, or $h = h'$ and $g \sim_G g'$.
For example, $K_2 \Osq K_2 = C_4$, the 4-cycle, and $K_2 \Osq C_4 = Q_3$, the cube.  Although there are other common graph products, in this document every reference to a ``product'' graph refers to the Cartesian-product graph.

A natural way to conceptualize $G \Osq H$ is to think of it as the graph $H$ (which we call the frame graph), with a copy of $G$ at each vertex.  For $j \in V(H)$, $G_j$ denotes the copy of $G$ at vertex $j$, so that $V(G_j) = V(G) \times {j}$.  We say that $G_j$ is a $G$-slice of $G \Osq H$, or if the context is understood, $G_j$ is simply a slice.  Similarly, $G \Osq H$ has $H$-slices of the form $H_i$, for $i \in V(G)$.
For $i \in V(G)$ and $j_1 \neq j_2 \in H$, we have $(i,j_1) \sim_{G \Osq H} (i,j_2)$ if and only if $j_1 \sim_H j_2$.  In this case, we say that slices $G_{j_1}$ and $G_{j_2}$ are adjacent.  Also, the distance between $G_{j_1}$ and $G_{j_2}$ is $D_H(j_1,j_2)$.

\subsection{Pebbling $G$}\label{sec:pebblinG_jntro}

\newcommand{\cc}{\tilde{c}}
A configuration (or {pebbling configuration}) on $G$ is a vector of nonnegative integers $c = \left(c_1, c_2, \dots, c_{\abs{G}}\right)$, where $c_i$  represents the number of pebbles placed on vertex $i \in V(G)$.  The {support} of $c$ is the set of vertices assigned at least one pebble by $c$, $\{i \in V(G): c_i > 0\}$.  We refer to the size of $c$ as $\|c\|_1$. We refer to the support-size of $c$ as $|\{i \in V(G): c_i > 0\}|$.

A pebbling move consists of removing two pebbles from one vertex and adding one pebble to an adjacent vertex.  More generally, a $d$-hop move consists of removing $2^{d}$ pebbles from vertex $v$ and adding one pebble to vertex $w$, where $D(v,w)=d$.  We say that a configuration $c$ is solvable if, given any choice of root $r \in V(G)$, there exists a (possibly empty) sequence of pebbling moves such that the resulting configuration has at least one pebble at $r$.  Otherwise, we say that $c$ is unsolvable. The {pebbling number} of $G$, denoted $\pi(G)$, is the lowest positive integer $k$ such that all configurations of size $k$  (i.e., $\|c\|_1 = k$) are solvable.

One variant of the pebbling game is to require the second player to move two pebbles to the root in order to win.  The 2-pebbling number of $G$ with respect to support $s$, $\pi_2(G, s)$, is the minimum number of pebbles such that if a configuration on $G$ has support size $s$, 2-pebbles are guaranteed to reach the root node.  A graph $G$ has the { 2-pebbling property} if  $\pi_2(G,s) \leq 2\pi(G) - s +1$, for all $s \in \{1,2,\dots,|G|\}$. In essence, the 2-pebbling property guarantees that each additional vertex of support provides a discount of one pebble when pebbling twice.  It is worth noting that a graph, $G$,  with the 2-pebbling property may have $\pi_2(G, s) < 2\pi_G - s + 1$. For instance, for the path graph on $n$ vertices $\pi_2(P_n,n-1) = 2^{n-1} + n  \le 2 \cdots 2^{n-1} = 2 \pi(P_n)$. Since calculating $\pi_2(G, s)$ exactly is generally infeasible, we will often use $2\pi_G - s + 1$ in place of $\pi_2(G, s)$ in our models for graphs with the $2$-pebbling property, with the understanding that this may create suboptimal bounds.

The monotonic 2-pebbling number of $G$ with respect to support $s$, $\pi_2^{mon}(G,s)$, is the number of pebbles required to 2-pebble $G$ if the support size is {at least} $s$.  Often the 2-pebbling and the monotonic 2-pebbling tables are the same.  However, the 2-pebbling table of the Lemke graph is not monotonic, as shown in Table \ref{tab:2pebblingtable}.

%Capturing this value allows us to model the dynamic situation in which a 2-pebbling strategy is employed after moving pebbles into a slice (which may increase the support size on the slice). 
 
\smallskip
\begin{table}\label{tab:2pebblingtable}
\setlength{\tabcolsep}{6pt}
\renewcommand{\arraystretch}{1.2}
\[ \begin{tabular}{| l | c | c | c | c | c | c | c|  c| }
\hline
~\text{Support-size, $s$~} & ~1~ & ~2~ & ~3~ & ~4~ & ~5~ & ~6~ & ~7~ & ~8~ \\ \hline
~\text{$\pi_2(L, s)$} & 16 & 15 & 14 & 13 & 14 & 11 & 10 & 9 \\ \hline
~\text{$\pi_2^{mon}(L, s)$} & 16 & 15 & 14 & 14 & 14 & 11 & 10 & 9 \\ \hline
\end{tabular} \]
\caption{The 2-pebbling and monotonic 2-pebbling tables of the Lemke graph.}
\end{table}
\smallskip

\subsection{Partial Pebbling $G \Osq H$}\label{sec.partialpeb}

\newcommand{\bc}{\bar{c}}
 
The strength of the constraints in our IP formulation result from modeling at the level of {partial-pebblings} of product graphs.  This concept allows us to encode a  large class of similar pebbling strategies with each constraint.

When considering the product graph $G \Osq H$, we use $K$ to denote an arbitrary one of the graphs $G$ or $H$. By the symmetry inherent in the construction of product graphs, any variable, statement or constraint on $G \Osq H$ based on the graph $G$ can be adapted to deduce another variable, statement or constraint based on $H$. In some contexts, we need to reference ``the other graph'' (i.e., the graph among $G$ and $H$ that is not $K$) which we will denote $\bar K$.

A {\em partial configuration} with respect to $K$ on $G \Osq H$ allocates pebbles to the slices $K_j$, for $j \in V(\bar{K})$, rather than to individual vertices of $G \Osq H$.  In a partial configuration $\tilde{c}_K = \left(\tilde{c}_{K,1}, \tilde{c}_{K,2}, \dots, \tilde{c}_{K,\abs{\bar{K}}}\right)$, the nonnegative integer $\tilde{c}_{K,j}$ represents the number of pebbles distributed to slice $K_j$.  When a root $(r_G , r_H)$ of $G \Osq H$ is chosen, we say that $K_{r_{\bar{K}}}$ is the {$K$-root slice}.
%For clarity, we will refer to the exact root vertex as the { root-root}. 

%There is an obvious canonical map from full to partial configurations on $G \Osq H$, $\phi \colon \mathbb{Z}_{\ge 0}^{\abs{K} \abs{\bar{K}}} \to \mathbb{Z}_{\ge 0}^{\abs{\bar{K}}}$.  A partial configuration $\tilde{c}_K$ is unsolvable if there exists some $c \in 
%\phi^{-1} (\tilde{c}_K)$ that is unsolvable.  To demonstrate that $\pi(G \Osq H) \le k$ using partial configurations, one must show that every partial configuration $\tilde{c}_K$ of size $k$ is solvable.   

Normal pebbling moves cannot necessarily be made using the information of $\tilde{c}_K$ alone. For instance, if $\tilde{c}_{G,j} = |G|$, it could be that there is one pebble per vertex of $G_j$, so that no pebbling move originating in $G_j$ is possible. On the other hand, if ${c}_{G,j} > |G|$, then at least one vertex in the slice has 2 or more pebbles, and a pebbling move can be made. We say a slice is { $s$-saturated}, or has a {saturation level of $s$}, when $\tilde{c}_{K,i} \geq s|K|$.  If a slice is $(s-1)$-saturated, the pigeonhole principle guarantees that even one ``extra'' pebble (beyond the first $(s-1)|K|$)  implies the existence of a {$s$-stack}, a collection of $s$ pebbles on a single vertex.  This concept is formalized in Lemma \ref{lem:movedistance}.

We take this nuance a step further by capturing the support size of each slice.  In this case, we may assume the existence of an $s$-stack on some vertex without necessarily having $(s-1)$-saturation on the associated slice.  

Finally, we call a collection of $\pi(K)$ pebbles on the vertices of a slice $K_j$ a $K$-set, or if the context is clear, a {set}.  
The number of $K$-sets within a $K$-slice is its {set count}, and $G \Osq H$ has a {total $K$-set count}, which is the sum of the $K$-set counts over its $K$-slices.  

\section{IP model for bounding $\pi(G \Osq H)$}\label{sec.model}

\subsection{Strategy}

Let $\mathscr{U}$ be the set of all unsolvable configurations on $G \Osq H$.  We describe a relaxation $\mathscr{R}$ of $\mathscr{U}$ ($\mathscr{U} \subseteq \mathscr{R} \subseteq \mathbb{Z}^{|G \Osq H|}_{\geq 0}$), so that  
\begin{eqnarray*}
\pi(G \Osq H) &=& 1 + \max_{c \in \mathscr{U}} \left\{\|c\|_1\right\} \\
   &\leq& 1 + \max_{c \in \mathscr{R}} \left\{\|c\|_1\right\}.
\end{eqnarray*} 
So our partial-pebbling IP takes the form of $\max_{c \in \mathscr{R}} \left\{\|c\|_1\right\}$, where $\mathscr{R} \supseteq \mathscr{U}$ is the intersection of $\mathbb{Z}$ with a polytope described by linear constraints.

Each of our pebbling constraints models a successful pebbling strategy.  In other words, any partial configuration that violates a given pebbling constraint may be successfully solved via the strategy modeled by the constraint.  In this way, we know that every partial configuration not in $\mathscr{R}$ is solvable, resulting in the relaxation of $\mathscr{U}$ that we require.

Pebbling constraints rely on counting the number of pebbles $required$ at some slice or vertex (to carry out a pebbling strategy), versus the number of pebbles that are $available$ there, resulting in the standard form,
\[
\textit{available} + 1 \leq \textit{required},
\]
where $available$ and $required$ are restricted to integer values.
A partial configuration $\tilde{c}_K$ violates the constraint (and $\tilde{c}_K$ is certifiably solvable by the modeled strategy), only if $\textit{available} \geq \textit{required}$, i.e., if there are enough pebbles to carry out the strategy.   In order to maintain a relaxation of $\mathscr{U}$ (and thus a valid upper-bound on $\pi(G \Osq H)$), when exact values are not possible, we use a lower bound for $available$ and an upper bound for $required$.

%\daphne{maybe move this? get rid of it?}
%Pebbling constraints are labeled according to strategy.  For example, A.2($K,A,B$) is the second constraint in the list that models strategy A, and requires parameters $K$, $A$, and $B$.  

\subsection{Model Data}

The IP model requires the following information:

\begin{itemize}[]
\item $G$ and $H$ (as lists of nodes and edges),
\item $(r_G, r_H)$,
\item $\pi(G)$ and $\pi(H)$, and
\item 2-pebbling tables of $G$ and $H$.
\end{itemize}
From this information, we derive all of the necessary model data parameters (listed below). We make this distinction as the input data above does not directly feed into the model, whereas the derived model parameters below do.

The data parameters required for our IP model are as follows.  Everywhere that $K$ appears in the index sets and parameters listed below, it is understood to be indexed over the set $\{G,H\}$.

\bigskip

\noindent \textbf{Index Sets}

\renewcommand{\arraystretch}{1.5}
\begin{tabular}{m{0.05\textwidth} m{0.05\textwidth} p{0.75\textwidth}} 
	$V(K)$ &  $:=$& $\{1, 2, \dots, |K|\}$, vertices of $K$; \\
	$\mathcal{S}_K$ &  $:=$& $\{0, 1, \dots, \pi(\bar{K})-1\}$, possible set counts of a $K$-slice;\\
      $\mathcal{T}_K$ & $:=$ & $\{0, 1, \dots, \left\lfloor \frac{\pi(G) \pi(H) -1}{|K|} \right\rfloor\}$, possible saturation levels of a $K$-slice; \\
      $\mathcal{D}_K$ & $:=$ & $\{1, 2, \dots, diam_K\}$, possible distances between vertices in $K$;\\
      $U_{K}$ & $:=$ & $\{0\} \cup \{s \in \{1, 2, \dots, |K|\}: \pi_2(K,s) > 2\pi(K) - s + 1\},$ support sizes that correspond to non-standard 2-pebbling numbers of $K$; \\
	$U^{mon}_{K}$ & $:=$ &  $\{0\} \cup \{s \in \{1, 2, \dots, |K|\}: \pi_2^{mon}(K,s) > 2\pi(K) - s + 1\}$, support sizes that correspond to non-standard monotonic 2-pebbling numbers of $K$.
\end{tabular}

\smallskip
\noindent\textbf{Parameters}

\begin{tabular}{m{0.2\textwidth} m{0.05\textwidth} p{0.6\textwidth}} 
	$(r_G, r_H)$ &  $:=~$& the root node in $G \Osq H$;\\
      $M$ &  $:=~ $& $2\pi(G) \pi(H)$, a ``big'' constant.\\ 
      $|K|$ &  $:=~ $& vertex-count of $K$; \\
      $D_K(i,j)$ &  $:=~ $& the distance in $K$ between $i$ and $j$, for all pairs $i, j \in V(K)$;\\
	$diam_K$ & $:=~$ & the diameter of $K$;\\
	$\pi(K)$ &  $:= ~$&   pebbling number of $K$; \\
%	$\pi_2(K,s)$ & $:=~$ & the 2-pebbling table of $K$, for $s \in \{1,2,\dots,|K|\}$; \\
%	$\pi_2^{mon}(K,s)$ & $:=~$ & $\max_{t \geq s}\{\pi_2(K,t)\}$, for $s \in \{1,2,\dots,|K|\}$; \\
	$difference_{K,0}$ & $:=~$ & $-1$; \\
	$difference_{K,0}^{mon}$ & $:=~$ & $-1$; \\ 
	$difference_{K,s}$ & $:=~$ & $\pi_2(K,s) - (2\pi(K) - s + 1)$, for $s \in U_K \backslash \{0\}$; \\
	$difference_{K,s}^{mon}$ & $:=~$ & $\pi_2^{mon}(K,s) - (2\pi(K) - s + 1)$, for $s \in U_K^{mon}\backslash \{0\}.$ 
\end{tabular}

\medskip
%\daphne{clean up this section.  there are vague references.}
The index sets  $U_K$ and ``$difference$'' parameters help to model 2-pebbling dynamically in the constraints via the variables $num2peb_{K,v}$ and  $num2peb^{mon}_{K,v}$ (see below).  As a special case, we fix  $difference_{K,0} = difference_{K,0}^{mon} = -1$, which has the effect of setting the 2-pebbling number (and monotonic 2-pebbling number) of $K$ to $2\pi(K)$ when the support size is $0$. This choice ensures the correct behavior of our constraints for the case when no pebbles are assigned to a slice. 

Some constraints use the 2-pebbling numbers directly, but other constraints require the number of pebbles needed for 2-pebbling given a support of the current size or greater.  In the latter case, constraints model strategies that require pebbling into a slice and then applying a 2-pebbling discount, so we need to account for the fact that the support size could increase.  This requirement motivates the {monotonic 2-pebbling numbers}, $\pi_2^{mon}(K,s)$, as defined above.

%\daphne{We should think again about what MAX needs to be.  Should we change it to M?} 
We define $M$ as the simple upper bound on $\pi(G \Osq H)$ from \cite{AHK2017}.  Many of our constraints are enforced or relaxed based on the value of some binary variable(s).  In these constraints, $M$, or some small multiple of $M$, is used as the standard ``big $M$'' (from integer programming).

\subsection{Decision Variables}\label{sec:variables}
 
In this section, we list our decision variables, sorted numeric type, and discuss a few interesting cases. We manage the behavior of all decision variables with the linear constraints listed in Section \ref{sec:variableconstraints}.  In the table below, the index $K \in \{G,H\}$, and $j$ ranges over all $j \in V(\bar{K})$.

\medskip

\noindent\textbf{Integer variables ($\mathbb{Z}_{\geq 0}$)}
%\begin{center}
\begin{longtable}{m{0.2\textwidth} m{0.05\textwidth} p{0.6\textwidth}} 
$c_{i,j}$ & $:=$ & number of pebbles assigned to $(i,j) \in V(G \Osq H)$; \\
$\tilde{c}_{K,j}$&$ :=$ & number of pebbles assigned to $K_j$;  \\
$set_{K,j}$ &$ :=$ & $K$-set count of $K_j$, $\left\lfloor \frac{\tilde{c}_{K,j}}{\pi(K)}\right\rfloor$; \\
$extra_{K,j}$ & $:=$& number of extra pebbles on $K_j$, $\tilde{c}_{K,j} \hspace{-.1cm}\mod \pi(K)$ \\
$sat_{K,j}$ & $:=$&  saturation level of $K_j$, $\left\lfloor \frac{\tilde{c}_{K,j} - extra_{K,j}}{|K|}\right\rfloor$; \\
$pair_{K,j}$&$ :=$&  number of pairs in $extra_{K,j}$, $\left \lfloor \frac{extra_{K,j}}{2} \right\rfloor $;\\
$support_{K,j}$&$ :=$&  support size of pebbles within $K_j$; \\
$\textcolor{black}{stack_{K,j,d}}$&$ :=$&  number of $2^{d}$-stacks in $K_j$, for $d \in \mathcal{D}_{\bar{K}}$ (lower bound);\\
$n2peb_{K,j}$&$ :=$&  $\pi_2(K,support_{K,j})$; \\
$\textcolor{black}{n2peb_{K,j}^{mon}}$&$ :=$&  $\pi_2^{mon}(K,support_{K,j})$; \\ 
$\textcolor{black}{nroot_{K,j}}$&$ :=$&  number of pebbles that can reach $\bar{K}_{r_K}$  in $K_j$ (lower bound).
\end{longtable}
%\end{center}
\addtocounter{table}{-1}

\noindent\textbf{Binary variables ($0/1$)}
\begin{center}
\begin{longtable}{m{0.2\textwidth} m{0.05\textwidth} p{0.6\textwidth}} 
$\textcolor{black}{covered_{i,j}}$ & $:=$ & 1 iff $~c_{i,j} \geq 1$; \\
$\textcolor{black}{x_{K,j,t}}$& $:=$  &1 iff $~sat_{K,j} \geq t$, for $t \in \mathcal{T}_K$; \\
$y_{K,s}$&$ :=$ & 1 iff $~\sum_{j \in V(\bar{K})} set_{K,j} \geq s$, for $s \in \mathcal{S}_K$; \\
$goodStack_{K,j,d}$&$:=$ & 1 iff $\tilde{c}_{K,j} \geq (2^{\ell}-1)(support_{K,j}-1)$, for $d \in \mathcal{D}_{\bar{K}}$; \\
$can2peb_{K,j}$&$:=$ & 1 iff $~\tilde{c}_{K,j} \geq n2peb_{K,j}$; \\
$\textcolor{black}{supportIs_{K,j,s}}$ & $:=$& 1 iff $support_{K,j} = s,$ for $s \in U_K \cup U_K^{mon}$; \\
$\textcolor{black}{supportLess_{K,j,s}}$ & $:=$& 1 iff $support_{K,j} \leq s,$ for $s \in U_K \cup U_K^{mon}$; \\
$\textcolor{black}{supportMore_{K,j,s}}$ & $:=$ & 1 iff $support_{K,j} \geq s,$ for $s \in U_K \cup U_K^{mon}$.
\end{longtable}
\end{center}
\addtocounter{table}{-1}

There are a few details about the variables that are worth noting.  
\begin{itemize}
\setlength\itemsep{1 em}
\item The pebbles counted by $extra_{K,j}$ are ``extra'' in the sense that they contribute neither to the set count nor the saturation level of $K_j$.  
\item  All use of pebbling numbers and 2-pebbling numbers of subgraphs of $G \Osq H$ are still valid for use in the integer program when replaced by upper bounds on those pebbling numbers.  Of course, tighter bounds result in stronger constraints.
\item  As mentioned in the comments about the parameters, if the support size of $K_j$ is 0, we define $n2peb_{K,j} = n2peb^{mon}_{K,j} = 2\pi(K)$.  
\item The variable $nroot_{K,j}$ takes advantage of the 2-pebbling discount on the first two pebbles.  Within $K_j$, two pebbles reach $\bar{K}_{r_K}$ with the first $n2peb_{K,j}$ pebbles; beyond that, one pebble per set reaches $\bar{K}_{r_K}$ (as a lower bound).
%\item The variable $Aset_{K,j,A}$ {can} be defined for any subgraph of $K$ that contains $r_K$.  Rather than 
\item The following variables are only used to define the other variables, and are not used in any pebbling constraints directly:  $covered_{i,j}$, $goodStack_{K,j,d}$, $support_{K,j}$, $supportIs_{K,j,s}$, $supportLess_{K,j,s}$, $supportMore_{K,j,s}$.
\end{itemize}
The constraints defining the behavior of all variables are listed in Section \ref{sec:variableconstraints}.

%\begin{itemize}
%\item
%\daphne{The blue variables are different from the way they were named in the first paper.  The indices are reversed for x and stack.  nroot replaces nHr.}
%\item
%\textcolor{purple}{ Purple indicates a new variable. }
%\item
%\textcolor{gray}{ Gray indicates a variable that can go away.}
%\end{itemize}
%\bigskip

%We calculate the lower bounding $stack_{\ell,j}$ as
%\[
%  stack_{\ell,j} = \left\lceil \frac{c_j - (2^{\ell} - 1)support_j}{2^{\ell}} \right\rceil,
%\]
%because the maximum possible number of surplus pebbles (that are not part of a $2^\ell$-stack in $G_j$) is $(2^\ell - 1)support_j$. 

\subsection{Pebbling Constraints}\label{sec:pebconstraints}
Each of our pebbling constraints fits (at least roughly) into one of two categories based on the pebbling strategy it models.
In the descriptions of these strategies, as in our pebbling constraints, we are thinking of $\bar{K}$ as the frame graph, with a copy of $K$ at each node.

\begin{description}
\item[Strategy A:] Collect enough $K$-sets among the $K$-slices to ensure that $\pi(\bar{K})$ pebbles can reach copies of $r_K$ using within $K$-slice moves.  This creates a $\bar{K}$-set in the $\bar{K}$-root slice with which to pebble the root.
\item[Strategy B:] Use between slice moves in the direction of the $K$-root slice to collect $\pi(K)$ of pebbles there.  Use these pebbles to reach the root within the $K$-root slice.
\end{description}
We label each pebbling constraint as either $A$ or $B$, to indicated the basic strategy that it employs.  For example, \ref{cons:A2} is the second constraint in the list that models strategy $A$, and it is indexed over the parameters $K$, $v$, $\eta$ and $S$.

%\item[Strategy  C:] 
% a targeted attack on the root $(r_G, r_H)$ results from accumulating a $2^{\ell}$-stack on the copy of the $r_K$ in $K_j$, where $D_{\bar{K}}(j,r_{\bar{K}}) = \ell$.  

\subsubsection{Strategy A Constraints.}  The constraints in this section model the accumulation of enough pebbles in the $\bar{K}$-root slice to pebble the root within that slice.  Most model the accumulation of at least $\pi(\bar{K})$ $K$-sets among the $K$-slices per Lemma \ref{thm:catA}, so it is important to account for the total $K$-set count in the initial configuration, which is captured by the $y_{K,s}$ variables. %All of the strategy A constraints are based on Lemma \ref{thm:catA}.

\begin{lemma}\label{thm:catA}
Any configuration that has a total $K$-set count of at least $\pi(\bar{K})$ is solvable, for $K \in \{G,H\}$.
\end{lemma}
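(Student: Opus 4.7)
The plan is a two-phase strategy that operationalizes Strategy A. In phase one, for each $K$-slice $K_j$, I use only the pebbles lying in $K_j$ to pebble the vertex $(r_K,j)$ as many times as the $K$-set count of $K_j$ allows. This accumulates pebbles on the vertices $\{(r_K,j):j\in V(\bar K)\} = V(\bar K_{r_K})$, turning the problem into an instance of ordinary pebbling on the single slice $\bar K_{r_K}$. In phase two, I invoke the definition of $\pi(\bar K)$ on that instance to pebble the root $(r_G,r_H)$, which corresponds to $r_{\bar K}$ under the natural isomorphism $\bar K_{r_K}\cong \bar K$.

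For phase one, fix a $K$-slice $K_j$ with $\tilde c_{K,j}$ pebbles and set $n_j := \lfloor \tilde c_{K,j}/\pi(K)\rfloor$. I would partition the pebbles in $K_j$ into $n_j$ disjoint groups of exactly $\pi(K)$ pebbles each (plus a leftover group of size $\tilde c_{K,j}-n_j\pi(K) < \pi(K)$). This is a partition of the pebbles viewed \emph{individually}, not a partition of the vertices: a vertex holding many pebbles can send some of its pebbles into one group and the rest into another. Each resulting group is a nonnegative integer vector of total weight $\pi(K)$ on $V(K_j)$, i.e., a pebbling configuration of size $\pi(K)$ on a copy of $K$. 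The definition of $\pi(K)$ guarantees that any such configuration admits a sequence of pebbling moves placing a pebble at $r_K$, which in $K_j$ is the vertex $(r_K,j)$. Because the $n_j$ groups are pebble-disjoint, their pebbling sequences do not conflict and can be executed consecutively, producing $n_j$ pebbles on $(r_K,j)$.

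Summing over all $K$-slices, phase one deposits $\sum_{j\in V(\bar K)} n_j \ge \pi(\bar K)$ pebbles onto the vertices of $\bar K_{r_K}$, which (with only within-slice moves available) behaves exactly like $\bar K$. Phase two is then immediate: invoke $\pi(\bar K)$ with root $r_{\bar K}$ to land a pebble at $(r_G,r_H)$.

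The only delicate step is the pebble-disjoint partitioning in phase one; the point to verify carefully is that splitting a stack on a single vertex between two groups does not invalidate the use of $\pi(K)$ on either group, which follows because $\pi(K)$ applies to \emph{any} nonnegative integer configuration of the prescribed total. Once this is granted, the remaining content is bookkeeping of pebble counts, and I do not anticipate further obstacles.
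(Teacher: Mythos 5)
Your proof is correct and takes essentially the same route as the paper's: each $K$-set delivers one pebble to the copy of $r_K$ in its slice, accumulating at least $\pi(\bar K)$ pebbles in the slice $\bar K_{r_K}$, which is then solved as a copy of $\bar K$. The only difference is that you make explicit the pebble-disjoint partition justifying why $n_j$ sets within a single slice yield $n_j$ independent deliveries to $(r_K,j)$ --- a point the paper's one-line argument leaves implicit.
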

\begin{proof}
Without loss of generality, let $K := G$.  We can use a set in $G_j$ to move a pebble to any vertex of $G_j$, and in particular to $(r_G,j)$, the vertex in the intersection of $G_j$ and $H_{r_G}$.  If there are $\pi(H)$ sets across all slices, then we can move $\pi(H)$ pebbles into $H_{r_G}$ to reach the root vertex $(r_G, r_H)$ within $H_{r_G}$.\qed
\end{proof}

\begin{theorem}\label{thm:A1}
For $K \in \{G,H\}$, inequality 
\[
\displaystyle\sum_{i \in V(\bar{K})} set_{K,i} + 1 \leq \pi(\bar{K}) \label{A1}\tag{A.1($K$)}
\] 
is valid for $\mathscr{U}$.
\end{theorem}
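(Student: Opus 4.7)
The plan is to derive Theorem \ref{thm:A1} as an immediate corollary of Lemma \ref{thm:catA} via contrapositive, together with the integrality of the $set_{K,i}$ variables.

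First, I would fix $K \in \{G, H\}$ and consider an arbitrary unsolvable configuration $c \in \mathscr{U}$ on $G \Osq H$. By Lemma \ref{thm:catA}, any configuration whose total $K$-set count is at least $\pi(\bar{K})$ is solvable. Taking the contrapositive, since $c$ is unsolvable, we must have
\[
\sum_{i \in V(\bar{K})} set_{K,i} < \pi(\bar{K}).
\]
Next, I would invoke the fact that each $set_{K,i}$ is a nonnegative integer (by its definition as $\lfloor \tilde{c}_{K,i}/\pi(K) \rfloor$), so the left-hand sum is an integer. Combined with the fact that $\pi(\bar{K})$ is also an integer, the strict inequality above is equivalent to
\[
\sum_{i \in V(\bar{K})} set_{K,i} + 1 \leq \pi(\bar{K}),
\]
which is exactly \ref{A1}. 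Since $c \in \mathscr{U}$ was arbitrary, the inequality holds throughout $\mathscr{U}$, establishing its validity. \qed

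There is no real obstacle here; the content is concentrated entirely in Lemma \ref{thm:catA}, whose proof has already been given. The only subtlety is the step from a strict to a weak inequality, which relies on integrality of both sides, and this is guaranteed directly by the variable definitions in Section \ref{sec:variables}.
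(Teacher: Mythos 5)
Your proposal is correct and takes essentially the same route as the paper, which simply cites Lemma \ref{thm:catA} as giving the result directly; you have merely spelled out the contrapositive step and the integrality argument that converts the strict inequality into the ``$+1$'' form of \ref{A1}.
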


\begin{proof}
This is a direct consequence of Lemma \ref{thm:catA}.
\qed
\end{proof}

%While constraint set \ref{A1} is a direct consequence Lemma \ref{thm:catA},  the rest of the strategy A constraints model the accumulation of $\pi(\bar{K})$ sets within the $K$-slices via pebbling moves between $K$-slices.  

The next lemma relates the number of pebbles on a slice to the distance of between-slice moves that are possible from that slice, and follows easily by the pigeonhole principle.

\begin{lemma} \label{lem:movedistance} 
Let $K \in \{G,H\}$.  If there are at least $(2^{d}-1)|K| + 1$, pebbles on slice $K_{j}$ (or equivalently, $K_{j}$ is $(2^{d}-1)$-saturated with at least one extra pebble), then it is possible to make an $d$-hop move from $K_j$.
\end{lemma}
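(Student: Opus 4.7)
The plan is to derive the statement as a direct application of the pigeonhole principle together with the definition of a $d$-hop move. First I would fix the slice $K_j$ and note that it contains exactly $|K|$ vertices. If the configuration on $K_j$ satisfies $\tilde{c}_{K,j} \geq (2^d - 1)|K| + 1$, then by pigeonhole at least one vertex $(v,j) \in V(K_j)$ must carry at least $2^d$ pebbles; otherwise every vertex would have at most $2^d - 1$ pebbles, giving a slice total of at most $(2^d - 1)|K|$, which contradicts the hypothesis.

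Next I would convert those pebbles into a between-slice $d$-hop move originating in $K_j$. By the definition of the Cartesian product, for any $j' \in V(\bar K)$ with $D_{\bar K}(j,j') = d$ we have $D_{G \Osq H}\bigl((v,j),(v,j')\bigr) = d$; such a $j'$ exists whenever $d \leq diam_{\bar K}$, which is the regime in which the lemma is invoked. Fixing a geodesic in $G \Osq H$ from $(v,j)$ to $(v,j')$ that advances only in the $\bar K$ direction, I would apply $d$ consecutive ordinary pebbling moves along this path, halving the pebble count at each step: $2^d \to 2^{d-1} \to \cdots \to 1$. The composition of these $d$ moves is precisely a single $d$-hop move from $(v,j) \in K_j$ to $(v,j') \in K_{j'}$, as required.

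There is essentially no hard part; the proof reduces to a short chain of two elementary observations. The only subtlety worth flagging in the write-up is the implicit assumption that $d \leq diam_{\bar K}$ so that a target slice at distance $d$ actually exists, but this is already guaranteed whenever the lemma is invoked via the index set $\mathcal{D}_{\bar K}$ introduced earlier. I anticipate the written proof being only a few lines long.
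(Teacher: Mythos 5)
Your proof is correct and follows essentially the same route as the paper's: the paper likewise invokes the pigeonhole principle to extract a vertex of $K_j$ holding at least $2^d$ pebbles, which by definition permits a $d$-hop move. Your additional remarks about realizing the $d$-hop move as $d$ consecutive pebbling moves along a geodesic in the $\bar{K}$ direction, and about the existence of a target slice at distance $d$, merely spell out details the paper leaves implicit.
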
 
The lemma follows immediately from the pigeonhole principle; as there must be a vertex with $2^d$ pebbles, thereby allowing a $d$-hop move. \qed

Constraint \ref{cons:A2}, below, models the accumulation of one or more extra sets among the $K$-slices by pebbling from a central slice $K_v$, for $v \in V(\bar{K})$, to complete partial sets at nearby slices.  In particular, if $S \subseteq V(\bar{K})\backslash\{v\}$ indexes the slices where new sets are completed, extra pebbles at $K_{v}$, along with pebbles from $|S| - \eta$ sets in $K_{v}$, are used to complete $|S|$ sets at nearby slices, for a total increase of $\eta$ new sets.  This constraint is deactivated if the current total $K$-set count is less than $\pi(\bar{K}) - \eta$.  The choice of $S$ is limited by the size of $\pi(K)$.  For example, in the setting of $G = H = L$, a 3-hop move from $G_v$ would exhaust an entire set, so the nodes in $S$ are no more than a distance of 2 from $v$ in $\bar{K}$.

%Constraints \ref{cons:IA2} model the accumulation of one or more additional sets by shifting pebbles from a single $K$-slice to $\alpha_{d}$ $K$-slices a distance $d$ away, for $d \in \mathcal{D}_{\bar{K}}$.  In particular, for $v \in \bar{K}$, this pebbling strategy uses extra pebbles in $K_v$, along with pebbles from $\alpha_1 + \alpha_2 + \dots + \alpha_{diam_{\bar{K}}} - \eta$ sets in $G_v$, in order to accumulate $\alpha_1 + \alpha_2 + \dots + \alpha_{diam_{\bar{K}}}$ new sets, increasing the total set count by $\eta$.  These constraints could be extended to include pebbling moves of greater distances, but in the setting of $G = H = L$, our current focus, a 3-hop move from $G_v$ would exhaust an entire set.

\begin{theorem}\label{thm.12-shift} Fix $K \in \{G,H\}$, $v \in V(\bar{K})$, and $\eta \in \{1, 2, \dots, \pi(\bar{K})\}$.  Select $S \subseteq V(\bar{K})\backslash\{v\}$ such that 
\[ 
d ~:=~ \max_{w \in S} \{D_{\bar{K}}(v,w)\} ~\leq~ \lceil\log_2(\pi(K))\rceil-1,
\]
and $\eta \leq |S| \leq \pi(\bar{K})$.  Then
\begin{equation} \tag{A.2($K,v, \eta, S$)} \label{cons:A2}
\begin{cases}
|K|\left(|S| - \eta\right) &\hspace{-2mm}+ ~extra_{K,v} + 1 ~\leq~\\
&\displaystyle \sum_{w \in S} 2^{D_{\bar{K}}(v,w)}(\pi(K) - extra_{K,w}) \\
&\quad+ ~M\left(1 - x_{K,v,\chi}\right) \\
&\quad+ ~M\left(1 - y_{K,\pi(\bar{K}) - \eta}\right), 
\end{cases}
\end{equation}
where $\chi = (2^d - 1) + |S| - \eta$, is valid for $\mathscr{U}$.
\end{theorem}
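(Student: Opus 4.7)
The plan is to establish validity of \ref{cons:A2} for $\mathscr{U}$ by showing that any configuration violating the inequality is solvable. First, I would dispatch the trivial cases via the big-$M$ terms: if either $x_{K,v,\chi} = 0$ or $y_{K,\pi(\bar{K})-\eta} = 0$, then the right-hand side inherits a copy of $M = 2\pi(G)\pi(H)$, which exceeds the total pebble count in any configuration on $G \Osq H$, so the inequality holds vacuously. Thus it suffices to consider configurations satisfying $sat_{K,v} \geq \chi = (2^d - 1) + (|S| - \eta)$ and total $K$-set count at least $\pi(\bar{K}) - \eta$.

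Abbreviating $R := \sum_{w \in S} 2^{D_{\bar{K}}(v,w)}(\pi(K) - extra_{K,w})$, the violation of \ref{cons:A2} reads $R \leq |K|(|S|-\eta) + extra_{K,v}$. The modeled pebbling strategy is the following: for each $w \in S$, perform $\pi(K) - extra_{K,w}$ successive $D_{\bar{K}}(v,w)$-hop moves originating at $K_v$, each depositing a single pebble in $K_w$. After all such moves, slice $K_w$ holds exactly $(set_{K,w}+1)\pi(K)$ pebbles, gaining one fresh $K$-set, and summing across $w \in S$ produces $|S|$ new $K$-sets in total.

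I would then verify feasibility move by move. From $sat_{K,v} \geq \chi$ we get $\tilde{c}_{K,v} \geq ((2^d-1)+(|S|-\eta))|K| + extra_{K,v}$, so the violation bound yields $\tilde{c}_{K,v} - R \geq (2^d-1)|K|$. Ordering the moves by decreasing hop distance, just before the last $d'$-hop in each phase the pebble count at $K_v$ is at least $(\tilde{c}_{K,v} - R) + 2^{d'} \geq (2^d-1)|K| + 2^{d'}$, which meets the threshold $(2^{d'}-1)|K|+1$ demanded by Lemma \ref{lem:movedistance} for every $1 \leq d' \leq d$, so each hop is executable. As for the set-count ledger at $K_v$, the violation together with $\pi(K) \geq |K|$ gives $R - extra_{K,v} \leq |K|(|S|-\eta) \leq \pi(K)(|S|-\eta)$, so $K_v$ loses at most $|S|-\eta$ sets. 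The net change in total $K$-set count is therefore at least $|S| - (|S|-\eta) = \eta$, raising the total to at least $\pi(\bar{K})$, and Lemma \ref{thm:catA} certifies solvability.

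The main obstacle is keeping two different units of pebble-counting straight: saturation (in units of $|K|$, governing hop-feasibility through Lemma \ref{lem:movedistance}) versus set count (in units of $\pi(K)$, required to invoke Lemma \ref{thm:catA}). The inequality $\pi(K) \geq |K|$ bridges the two, while the largest-hop-first ordering prevents the saturation at $K_v$ from ever falling below the level required by the next hop.
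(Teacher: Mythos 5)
Your proof is correct and follows essentially the same route as the paper's: relax via the big-$M$ terms, use the $\chi$-saturation of $K_v$ together with Lemma \ref{lem:movedistance} to fund the $d$-hop moves, complete one set per slice in $S$, and use $|K|\le\pi(K)$ to bound the sets disassembled at $K_v$ by $|S|-\eta$ before invoking Lemma \ref{thm:catA}. Your move-by-move feasibility check (remaining pebbles $\ge(\tilde{c}_{K,v}-R)+2^{d'}\ge(2^{d'}-1)|K|+1$) is a more explicit justification of the step the paper compresses into ``there are at least $(|S|-\eta)|G|+extra_v$ pebbles available for $d$-hop moves,'' but it is the same argument.
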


\begin{proof} WLOG, let $K := G$.  We may assume that the total $G$-set count is at least $\pi(H) - \eta$, and that $G_v$ is at least $\chi$-saturated; otherwise, the constraint is relaxed by one of the $M$ terms.  

Due to the saturation level at $G_v$, by Lemma \ref{lem:movedistance} there are at least 
$(|S| - \eta)|G| + extra_v$ pebbles available in $G_v$ to be used in $d$-hop moves.
It costs $2^{\ell}$ pebbles to make an $\ell$-hop move, so the number of pebbles required in $G_v$ to complete one set per element of $S$, is $\sum_{w \in S} 2^{D_{\bar{K}}(v,w)}(\pi(K) - extra_{K,w})$.

If the constraint is violated, enough pebbles are available to carry out this strategy: up to $(|S| - \eta)|G|$ pebbles may used from $G_v$ in order to create $|S|$ new sets, one in each of the $G_j$, for $j \in S$.  Since $|G| \leq \pi(G)$, no more than $(|S| - \eta)$ sets are disassembled at $G_v$.  This strategy increases the total set count by at least $\eta$. \qed
\end{proof}
%\begin{proof} %We may assume that $\sum_i {set}_i \ge \pi(H) - \eta$ and vertex $i$ is at least $(3 + \alpha + \beta - \eta)$-saturated. Otherwise, the coefficient of one of the ${MAX}$ terms is at least 1, and the bound holds.  Since each$i \in A$ is 3-saturated by at least 2${pairq}_i$ pebbles, we can perform $\sum_i {pairq}_i$ pebbling moves each moving a vertex to any vertex in $B$. If the number of such moves exceeds $\sum_{j \in B}(\pi(G) - q_j)$, then these pebbling moves can be allocated in such a way to complete each of these sets. In which case the resulting configuration will have an additional $\beta$ sets. In which case, the resulting configuration has at least   $\pi(H) - \beta + \beta =  \pi(H)$ sets, violating Theorem \ref{thm:1a1}.
%\textcolor{blue} {To do.}
%\franklin{A the start of a proof is in the comments.}
%\qed\end{proof}

%\begin{figure} \label{fig:IA2}
%
%[Figure of what the above constraint is doing]
%
%\caption{A schematic for the pebbling moves in Theorem \ref{thm.12-shift}}
%
%\end{figure}

Each constraint in the next class requires a complete bipartite subgraph of $\bar{K}$ with vertex partition $S \cup T \subseteq V(\bar{K})$.  Such a constraint models the collection of one additional set at each slice $K_j$, for $j \in T$, using only extra pebbles from the slices $K_i$, for $i \in S$.  This strategy increases the total $K$-set count by $|B|$.  Note that this constraint is {\bf not symmetric} with respect to $S$ and $T$.

\begin{theorem} Fix $K \in \{G,H\}$.  Let $(S,T)$ be an ordered pair of disjoint subsets of $V(\bar{K})$, with $i \sim_{\bar{K}} j$ for all $i \in S$, $j \in T$. Then 
\begin{equation} \tag{A.3($K,S,T$)}\label{cons:A3}  
\begin{cases}
\displaystyle \sum_{i \in S} pair_{K,i} + 1 \leq &\hspace{-3mm} ~~ \displaystyle\sum_{j \in T}(\pi(K) - extra_{K,j}) \\
& \quad + M  \left(|S| - \sum_{i \in S} x_{K,i,1} \right)\\
& \quad + M  \left(1 - y_{K,\pi(\bar{K}) - |T|} \right)
\end{cases}
\end{equation}
is valid for $\mathscr{U}$.  
\end{theorem}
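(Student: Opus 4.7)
The plan is to show that any configuration violating \ref{cons:A3} is solvable, via a Strategy-A argument that ends with an appeal to Lemma \ref{thm:catA}. Without loss of generality take $K:=G$; the case $K=H$ is symmetric.

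First I would unpack what a violation forces. Both big-$M$ terms must vanish, so $x_{G,i,1}=1$ for every $i\in S$ (each such $G_i$ is $1$-saturated) and $y_{G,\pi(H)-|T|}=1$ (the current total $G$-set count is at least $\pi(H)-|T|$). What remains is the arithmetic inequality $\sum_{i\in S}pair_{G,i}\geq\sum_{j\in T}(\pi(G)-extra_{G,j})$, i.e., there are enough pairs among the extras of the $S$-slices to complete one new $G$-set in each $T$-slice.

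I would then execute the natural strategy. Since $i\sim_{\bar K} j$ for every $(i,j)\in S\times T$, each vertex $(v,i)$ in $G_i$ is adjacent in $G\Osq H$ to $(v,j)$ in $G_j$, so any available pair on $G_i$ can be cashed in for a pebbling move landing one pebble in whichever $T$-slice I choose. Allocate the moves so that $G_j$ receives exactly $\pi(G)-extra_{G,j}$ pebbles, converting its existing extras into one new complete $G$-set. Each $G_i$ loses at most $2\,pair_{G,i}\leq extra_{G,i}<\pi(G)$ pebbles and therefore retains all $set_{G,i}$ of its pre-existing sets. The total $G$-set count consequently grows by $|T|$ to at least $\pi(H)$, and Lemma \ref{thm:catA} certifies solvability.

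The main obstacle is step two: justifying that $pair_{G,i}$ pebbling moves really are available from $G_i$ regardless of how its pebbles are distributed internally. The key lever is that $sat_{G,i}\geq 1$ together with the universal bound $\pi(G)\geq |V(G)|$ forces $set_{G,i}\geq 1$, so $\tilde c_{G,i}\geq \pi(G)+extra_{G,i}\geq |V(G)|+extra_{G,i}$. A pigeonhole count over the vertices of $G_i$ (using $\lfloor c/2\rfloor\geq (c-1)/2$) then yields $\sum_{v\in V(G)}\lfloor c_{v,i}/2\rfloor \geq (\tilde c_{G,i}-|V(G)|)/2 \geq extra_{G,i}/2 \geq pair_{G,i}$, supplying the required $2$-stacks. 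With this in place, the set-count bookkeeping and the appeal to Lemma \ref{thm:catA} are routine.
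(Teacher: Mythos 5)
Your proposal is correct and follows essentially the same route as the paper's proof: deactivate both big-$M$ terms, use the pairs counted by $pair_{G,i}$ on the $1$-saturated $S$-slices to send $\pi(G)-extra_{G,j}$ pebbles into each adjacent $T$-slice, conclude the total $G$-set count rises by $|T|$ to at least $\pi(H)$, and invoke Lemma \ref{thm:catA}. In fact you supply a pigeonhole justification (that $1$-saturation guarantees $\sum_v \lfloor c_{v,i}/2\rfloor \geq pair_{G,i}$ disjoint movable pairs whose removal cannot destroy an existing set) that the paper's proof asserts without argument, so your write-up is, if anything, more complete.
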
 

%\begin{theorem} Fix positive integers $\alpha, \beta \leq \Delta_H$, with $\alpha + \beta \leq |H|$.  Suppose $A$ and $B$ are disjoint subsets of $V(H)$ of sizes $\alpha$ and $\beta$, respectively, with $i \sim_H j$ for all $i \in A$, $j \in B$.  Then 
%\begin{equation} \tag{I.A.3($\alpha, \beta$)}\label{cons:IA3}  
%\begin{cases}
%\sum_{i \in A} pair_{i} + 1 \leq &\hspace{-3mm} ~~\sum_{j \in B}(\pi(G) - extra_j) \\
%& \quad + MAX \cdot (\alpha - \sum_{i \in A} x_{1,i} )\\
%& \quad + MAX \cdot (1 - y_{(\pi(H) - \beta)})
%\end{cases}
%\end{equation}
%is valid for $\mathscr{U}$.
%\end{theorem} 

\begin{proof} WLOG, let $K := G$. If the constraint is enforced, the total $G$-set count is at least $\pi(H) - |T|$, and $G_i$ is at least 1-saturated, for each $i \in S$.    The sum $\sum_{i \in S} pair_{G,i}$ counts the number of pairs of pebbles in slices indexed by $S$ that can be used to 1-pebble to neighboring slices indexed by $T$ (while decreasing neither the saturation level nor the set count at the slices indexed by $S$).  The summation $\sum_{j \in T}(\pi(G) - extra_{G,j})$ captures the cumulative number of pebbles required at the slices indexed by $T$ to build a complete set in each.  When the constraint is violated, there are enough pebbles to increase the total $G$-set count by $|T|$. \qed
\end{proof}

Rather than collecting $\pi(\bar{K})$ sets among the $K$-slices, the strategy for constraint \ref{A4} involves counting the number of pebbles that can reach the $\bar{K}$-root slice via within $K$-slice pebbling moves.  This constraint is the first that employs a 2-pebbling discount, which is ``hidden'' in the variables $nroot_{K,j}$.  Constraint \ref{A4} is a strengthened version of constraint \ref{A1}.

\begin{theorem} For $K \in \{G,H\}$, the inequality
\begin{equation} \tag{A.4($K$)}\label{A4} 
  \sum_{j \in V(\bar{K})} nroot_{K,j} + 1 ~\leq~ \pi(\bar{K}),
\end{equation}
is valid for $\mathscr{U}$.
\end{theorem}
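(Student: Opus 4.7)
The plan is to prove validity by contrapositive: show that any configuration violating \ref{A4} is solvable, and hence cannot lie in $\mathscr{U}$. Without loss of generality set $K := G$, so that $\bar{K} = H$ and the root slice is $H_{r_G}$, whose vertex set is $\{(r_G,j) : j \in V(H)\}$.

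First I would observe that, by its very definition, $nroot_{G,j}$ is a lower bound on the number of pebbles that can be delivered to the vertex $(r_G, j)$ using only pebbling moves contained in the slice $G_j$. Because the $G$-slices are vertex-disjoint, the within-slice strategies witnessing these values for different $j$ use disjoint pebbles and commute as a whole sequence of moves on $G \Osq H$. Thus, starting from the given configuration, we can realize a new configuration in which at least $nroot_{G,j}$ pebbles sit on $(r_G,j)$ for every $j \in V(H)$, giving a total of at least $\sum_{j \in V(H)} nroot_{G,j}$ pebbles supported entirely in $H_{r_G}$.

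Next I would use the fact that $H_{r_G}$ is, as an induced subgraph of $G \Osq H$, isomorphic to $H$, with the root $(r_G,r_H)$ of $G \Osq H$ corresponding to the vertex $r_H$ in this copy. If \ref{A4} is violated, then
\[
\sum_{j \in V(H)} nroot_{G,j} \;\geq\; \pi(H),
\]
so after performing the within-slice moves above, $H_{r_G}$ carries a configuration of at least $\pi(H)$ pebbles. By the definition of the pebbling number, this configuration admits a sequence of pebbling moves (all contained in $H_{r_G}$) reaching $(r_G, r_H)$. Concatenating this with the within-slice moves produces a winning sequence for the original configuration, contradicting unsolvability.

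The only subtlety is justifying that $nroot_{G,j}$ remains a legitimate lower bound once the 2-pebbling discount hidden inside its defining constraints is applied; this is inherited from the correctness of the variable-defining constraints in Section \ref{sec:variableconstraints} (in particular from the fact that $n2peb_{K,j}$ is an upper bound on $\pi_2(K,\cdot)$ and that the first $n2peb_{K,j}$ pebbles legitimately deliver two pebbles to $(r_K,j)$, while each subsequent complete set delivers at least one more). No further obstacle arises, since the rest of the argument reduces to a clean concatenation of a within-slice phase with a within-root-slice phase.
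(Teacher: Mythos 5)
Your argument is correct, and it is essentially the intended one: the paper states this theorem without a written proof, but the surrounding discussion (the description of $nroot_{K,j}$ as a lower bound on pebbles deliverable to $\bar{K}_{r_K}$ within $K_j$, and the remark that \ref{A4} strengthens \ref{A1}) points to exactly your two-phase argument, which is the natural extension of the proof of Lemma \ref{thm:catA}. Your closing remark correctly isolates the one real obligation --- that the variable-defining lower bounds force $nroot_{G,j}$ to a value no larger than the true number of pebbles deliverable to $(r_G,j)$, so that an unsolvable configuration admits a feasible assignment satisfying \ref{A4} --- and defers it to Section \ref{sec:variableconstraints}, which is where the paper places that burden as well.
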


%\begin{theorem} The inequality
%\begin{equation} \tag{A.4}\label{A4} 
%  \sum_{j \in V(H)} nHr_j + 1 ~\leq~ \pi(H),
%\end{equation}
%is valid for $\mathscr{U}$.
%\end{theorem} 

The next set of strategy A constraints requires an $\alpha$-star subgraph of $\bar{K}$ with central vertex $v$, and comes into play when $K_v$ is highly pebbled.  It models pebbling from $K_v$ to build sets in each of the $\alpha$ neighboring $K$-slices, and finishing out a collection or $\pi(\bar{K})$ pebbles in $\bar{K}_{r_K}$ by pebbling to the copy of $r_K$ within $K_v$ with a 2-pebbling discount.

\begin{theorem} Fix $K \in \{G,H\}$, $v \in V(\bar{K})$, and $S \subseteq \{w \in V(\bar{K}): w \sim_{\bar{K}} v\}$ such that $1 \leq |S| \leq  \pi(\bar{K})-\textcolor{black}{3}$.  Then,
 
\begin{equation} \tag{A.5($K,v, S$)}\label{A5}  \begin{cases}
  \tilde{c}_{K,v} + 1 &~\leq~ \left(2 \displaystyle \sum_{j \in S} (\pi(K) - extra_{K,j})\right) \\ & \quad + n2peb_{K,v} + (\pi(\bar{K}) - (2 + |S|))\pi(K)
  \end{cases}
\end{equation}
is valid for $\mathscr{U}$.
\end{theorem}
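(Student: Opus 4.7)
Proof plan:

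Without loss of generality, take $K := G$, and suppose the inequality is violated, so that $\tilde{c}_{G,v} \geq n2peb_{G,v} + 2\sum_{j \in S}(\pi(G) - extra_{G,j}) + (\pi(H) - 2 - |S|)\pi(G)$. The plan is to exhibit a pebbling strategy that deposits $\pi(H)$ pebbles on the $H$-root slice $H_{r_G}$; since $H_{r_G} \cong H$ contains the root $(r_G, r_H)$, the definition of $\pi(H)$ then furnishes a final sequence of within-slice moves reaching the root.

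I would divide the pebble supply on $G_v$ into three disjoint groups acting independently. Group $A$ consists of $n2peb_{G,v}$ pebbles, drawn so as to preserve the full support $support_{G,v}$; by the definition of $\pi_2(G, support_{G,v})$, these 2-pebble within $G_v$ to place two pebbles at $(r_G, v) \in H_{r_G}$. Group $B_j$, for each $j \in S$, consists of $2(\pi(G) - extra_{G,j})$ pebbles used to ship $\pi(G) - extra_{G,j}$ pebbles along single edges $(u,v) \sim (u,j)$ into $G_j$, where they combine with the $extra_{G,j}$ pebbles already present to complete a new $G$-set; within $G_j$ this set then delivers one pebble to $(r_G, j) \in H_{r_G}$. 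Group $C$ consists of the remaining $(\pi(H) - 2 - |S|)\pi(G)$ pebbles, forming $\pi(H) - 2 - |S|$ additional $G$-sets in $G_v$, each of which moves one more pebble to $(r_G, v) \in H_{r_G}$. Tallying gives $2 + |S| + (\pi(H) - 2 - |S|) = \pi(H)$ pebbles on $H_{r_G}$, setting up the final solve.

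The main obstacle is validating the Group $B_j$ accounting: a delivered pebble is charged only two pebbles, namely the cost of one direct pebbling move across an edge from $G_v$ to $G_j$, rather than the $2\pi(G)$ cost one would incur if entire $G$-sets were needed to first concentrate a pair on an appropriate vertex. I would handle this by noting that the hypothesized budget forces $\tilde{c}_{G,v}$ to be substantially larger than $|G|$, so that after reserving Group $A$ with its support requirement, the surplus on $G_v$ must contain enough directly usable pairs on single vertices; any residual concentration cost can be absorbed by the slack in Group $C$'s set budget. The most delicate step is thus the clean allocation of three disjoint pebble groups that simultaneously respects the support constraint of Group $A$ and the pairwise-locality constraint of Group $B_j$.
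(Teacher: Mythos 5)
Your proposal is correct and follows essentially the same three-part strategy as the paper's proof: spend $n2peb_{G,v}$ pebbles to 2-pebble $(r_G,v)$ within $G_v$, use $2\sum_{j\in S}(\pi(G)-extra_{G,j})$ pebbles for single-edge moves that complete one new $G$-set in each neighboring slice $G_j$ (each delivering a pebble to $H_{r_G}$), and convert the remaining $(\pi(\bar{K})-(2+|S|))\pi(K)$ pebbles into sets in $G_v$ to finish accumulating $\pi(H)$ pebbles in the root slice. Your noted obstacle about guaranteeing usable pairs for the direct moves is resolved in the paper exactly as you suggest: the hypothesis $|S|\leq \pi(\bar{K})-3$ guarantees the reserve of at least $\pi(G)\geq |G|$ pebbles, so the pigeonhole principle supplies the needed pairs.
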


%\begin{theorem} Fix $\alpha \leq \min\{\Delta_H, \pi(H)-2\}$ in $\mathbb{Z}_{\geq 0}$.  For any $v \in V(H)$, let $A \subset V(H)$, such that $\abs{A} = \alpha$, and $D(v,j) = 1$, for all $j \in A$.  Then 
%\begin{equation} \tag{III.A.5($\alpha$)}\label{IIIA5} 
%  \tilde{c}_v + 1 ~\leq~ \left(2 \sum_{j \in A} (\pi(G) - extra_j)\right) + n2peb_v + (\pi(H) - (2 + \alpha))\pi(G),
%\end{equation}
%is valid for $\mathscr{U}$.
%\end{theorem}
\begin{proof} WLOG, let $K := G$.
If the constraint is violated, enough pebbles are available in $G_v$ to carry out the following strategy.  First use $n2peb_{G,v}$ pebbles to move 2 pebbles to $(r_G,v)$ (in $H_{r_G}$) within $G_v$.  

The upper bound on $|S|$ ensures that $(\pi(H) - (2 + |S|))\pi(G) \geq \pi(G) \geq |G|$, so at least $2 \sum_{j \in S} (\pi(G) - extra_{G,j})$ pebbles are available for moves to adjacent slices.  Use these to build an additional set in each slice $G_j$, for $j \in A$.  Use these sets to put $|S|$ pebbles into the $H$-root slice, $H_{r_G}$.

This leaves at least $(\pi(\bar{K}) - (2 + |S|))\pi(K)$ pebbles in $K_v$.  Use these to finish out a set of $\pi(H)$ pebbles in $H_{r_G}$
\qed
\end{proof}
%\textcolor{blue}{We changed this value from 2 to 3 because $\pi(\bar{K}) - (2 + \alpha)$ must be at least 1 in order to guarantee 1 saturation to use the $2 \sum_{j \in A} (\pi(K) - extra_{K,j})$ pebbles to move out of this slice even after using the $n2peb_{K,v}$ pebbles to 2-pebble (without disturbing the support level first).}

%\subsubsection{Other Pebbling Constraints.}\label{sec.2p}

%\begin{proof}
%If the constraint is violated, $\pi(H)$ pebbles can be assembled in $H_{r_G}$. \qed
%\end{proof}

The last set of strategy $A$ constraints model a variation of strategy $A$.  Rather than accumulating a $\bar{K}$-set in the $\bar{K}$-root slice, we accumulate a $2^d$-stack on a node within the $\bar{K}$-root slice that is a distance of $d$ away from the root node.
In particular, the constraints model pebbling into slice $K_j$, and then applying a 2-pebbling discount there to build a $2^d$ stack on the copy of $r_K$ in $K_j$, where $d := D_{\bar{K}}(j,r_{\bar{K}})$.

\begin{theorem} Fix $K \in \{G,H\}$ and $v \in V(\bar{K})\backslash \{r_{\bar{K}}\}$, and let $d := D_{\bar{K}}(v,r_{\bar{K}})$.  Then
\begin{equation} \tag{A.6($K,v$)}\label{C1} 
  \left(\sum_{j \in V(\bar{K})\backslash \{r_{\bar{K}}\}} stack_{K,j,D_{\bar{K}}(v,j)}\right) + \tilde{c}_{K,v} + 1 ~\leq~\\ 
 {n2peb}_{K,v}^{mon} + (2^{d}-2)\pi(K),
\end{equation}
is valid for $\mathscr{U}$. 
\end{theorem}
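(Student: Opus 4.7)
My plan is to show that any partial configuration violating \ref{C1} is solvable by assembling a $2^d$-stack at $(r_K, v)$ in slice $K_v$ and then hopping it through the $\bar{K}$-root slice to the root. Take $K := G$ and $\bar{K} := H$ without loss of generality, so $d = D_H(v, r_H)$ and the target root slice is $H_{r_G}$.

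First I would transport pebbles into $G_v$: for each $j \in V(H)\setminus\{r_H\}$, apply a $D_H(v,j)$-hop move to each of the $stack_{G,j,D_H(v,j)}$-many $2^{D_H(v,j)}$-stacks certified in $G_j$, contributing one pebble apiece to $G_v$ (the $j=v$ term is vacuous because $D_H(v,v)=0$ lies outside the index set of $stack$). After this transport, $G_v$ holds at least $\tilde{c}_{G,v} + \sum_{j \in V(H)\setminus\{r_H\}} stack_{G,j,D_H(v,j)}$ pebbles, with support size no smaller than the original $support_{G,v}$.

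Next I would reserve a sub-configuration of $n2peb^{mon}_{G,v}$ pebbles inside $G_v$ whose support is still at least $support_{G,v}$ and apply the monotonic 2-pebbling property to deliver $2$ pebbles to $(r_G, v)$. When the constraint is violated, the remaining pebbles number at least $(2^d-2)\pi(G)$ and can be partitioned into $2^d-2$ disjoint $G$-sets of size $\pi(G)$; each set delivers one further pebble to $(r_G, v)$ by the definition of $\pi(G)$, producing a $2^d$-stack at $(r_G, v)$. Finally, since $(r_G, v)$ and $(r_G, r_H)$ both lie in $H_{r_G}$ at distance exactly $d$, a single $d$-hop move within $H_{r_G}$ collapses the $2^d$-stack to one pebble at $(r_G, r_H)$.

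The main obstacle is the reservation step: we must simultaneously split off enough pebbles to meet the 2-pebbling threshold and preserve support at least $support_{G,v}$ for the reserved sub-configuration. This is precisely the reason for invoking $\pi_2^{mon}$ rather than $\pi_2$, since the between-slice transport can enlarge the support of $G_v$ unpredictably and only the monotonic variant bounds the 2-pebbling cost uniformly over all possible final supports. A minor secondary concern is justifying that the $2^d-2$ leftover $G$-sets operate on pebbles disjoint from the reserved 2-pebbling block; this follows immediately because the construction describes a disjoint partition of the pebbles in $G_v$ into one 2-pebbling block and $2^d-2$ independent $G$-sets whose sub-strategies do not interfere.
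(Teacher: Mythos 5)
Your proposal is correct and follows essentially the same route as the paper's proof: transport the certified stacks into $K_v$, spend $n2peb^{mon}_{K,v}$ pebbles to place two pebbles on the copy of $r_K$ in $K_v$ (the monotonic variant being needed precisely because the transported pebbles may enlarge the support), spend $(2^d-2)\pi(K)$ more to complete a $2^d$-stack there, and finish with a single $d$-hop move to the root. Your added remarks on the vacuous $j=v$ term and on reserving a 2-pebbling block whose support is still at least $support_{K,v}$ only make explicit details the paper leaves implicit.
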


\begin{proof}  WLOG, let $K:=G$.
If  $D_{H}(j,v) = \ell$, $stack_{G,j,\ell}$ counts the number of pebbles that $G_j$ can contribute to $G_v$. After using stacks to pebble into $G_v$, the total number of pebbles available at $G_v$, is the expression on the left (without the $``+1"$).
 
If the constraint is violated, enough pebbles are available at $G_v$ to carry out the following strategy.  Use the stacks to pebble into $G_v$.  Use ${n2peb}_{K,v}^{mon}$ of the pebbles now on $G_v$ to move 2 pebbles to $(r_G,v)$.  Since we pebbled into $G_v$ before 2-pebbling $G_v$, it is necessary to reserve ${n2peb}_{K,v}^{mon}$ pebbles for 2-pebbling, rather than ${n2peb}_{K,v}$ pebbles.  Next we use $(2^{d}-2)\pi(G)$ pebbles to move an additional $2^{d}-2$ pebbles to $(r_G,v)$, with no 2-pebbling discount. With $2^{d}$ pebbles at $(r_G,v)$, a $d$-hop move places one pebble at $(r_G,r_H)$. \qed
\end{proof}

\subsubsection{Strategy B Constraints.}  For strategy B, it is no longer important to track the total $K$-set count.  Instead, we use as many pebbles as possible to build a set in the $K$-root slice, in order to pebble the root node.  This means we can use stacks, rather than saturation levels, to determine the between-slice moves that are possible.  %Note that the use of stacks has a tendency to spread pebbles out within slices.

The first strategy $B$ constraint is very straight-forward.

\begin{theorem}  The following equation is valid for $\mathscr{U}$:
\begin{equation} \tag{B.1}\label{const:B1}
   set_{r_H} = 0.
\end{equation}
\end{theorem}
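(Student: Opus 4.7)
The plan is to derive the constraint directly from the definition of the pebbling number by restricting attention to the slice that contains the root vertex. I read the statement as shorthand for the pair of constraints $set_{G, r_H} = 0$ and $set_{H, r_G} = 0$, one per choice of $K \in \{G, H\}$, since these count the $K$-sets living in the corresponding $K$-root slices $K_{r_{\bar{K}}}$ (the copies of $K$ and $H$ that contain the root $(r_G, r_H)$).

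First I would unpack the variable: by the definition of $set_{K, r_{\bar{K}}}$ given in Section~\ref{sec:variables}, if $set_{K, r_{\bar{K}}} \geq 1$ then $\tilde{c}_{K, r_{\bar{K}}} \geq \pi(K)$, so the root slice carries at least $\pi(K)$ pebbles. Next I would observe that $K_{r_{\bar{K}}}$ is an induced subgraph of $G \Osq H$ isomorphic to $K$ (by the definition of the Cartesian product), and that it contains the root vertex $(r_G, r_H)$. The restriction of the given configuration to $K_{r_{\bar{K}}}$ is therefore a configuration on a copy of $K$ of total weight at least $\pi(K)$, with root vertex corresponding to a vertex of that copy.

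Applying the definition of $\pi(K)$ to this restricted configuration yields a sequence of pebbling moves inside $K_{r_{\bar{K}}}$ that places a pebble on $(r_G, r_H)$. Since $K_{r_{\bar{K}}}$ is an induced subgraph, every one of these moves is also a legal pebbling move in $G \Osq H$, so the original configuration is solvable. Contrapositively, any $c \in \mathscr{U}$ must satisfy $set_{K, r_{\bar{K}}} = 0$ for each $K \in \{G,H\}$, establishing the constraint.

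There is essentially no obstacle to speak of; the argument is a one-line appeal to the definition of the pebbling number applied to the root slice. The only point worth checking is that within-slice moves really do lift to moves in $G \Osq H$, which is immediate from the edge rule $(i,j) \sim_{G \Osq H} (i,j')$ whenever $j \sim_{\bar K} j'$ (and similarly for the other coordinate) given in Section~\ref{sec.pre}.
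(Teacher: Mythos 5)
Your argument is correct and is exactly the intended one: the paper states \ref{const:B1} without proof (calling it ``very straight-forward''), and the omitted justification is precisely your observation that a full $K$-set in the root slice $K_{r_{\bar K}}$ gives $\pi(K)$ pebbles on an induced copy of $K$ containing $(r_G,r_H)$, which is therefore solvable within that slice. Your reading of the (admittedly underspecified) notation $set_{r_H}$ as the set count of the root slice, together with the check that within-slice moves lift to $G \Osq H$, fills the gap faithfully.
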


The next strategy B constraint, \ref{B2}, models using stacks of pebbles at all non-root slices to build a set in $K_{r_{\bar{K}}}$.  %Note that in strategy A, it is important to keep track of the number of sets, and to carefully track the increase in total set count that results from a violated A constraint.  In strategy B, however, the total set count does not matter, and we use as many pebbles as possible from each slice to build a set in the G-root slice.

\begin{theorem} Fix $K \in \{G,H\}$.  The inequality
\begin{equation} \tag{B.2($K$)}\label{B2} 
  \left(\sum_{j \in V(\bar{K})\backslash\{r_{\bar{K}}\}} stack_{K,j,D_{\bar{K}}(j,r_{\bar{K}})}\right) + \tilde{c}_{K,r_{\bar{K}}} + 1 ~\leq~ \pi(K) ,
\end{equation}
is valid for $\mathscr{U}$.
\end{theorem}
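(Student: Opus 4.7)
The plan is to fix $K := G$ without loss of generality and show that any partial configuration violating \ref{B2} is certifiably solvable via the most direct form of Strategy B: transport stacks from every non-root $G$-slice into the $G$-root slice $G_{r_H}$, and then invoke $\pi(G)$ to finish off the root within that slice.

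First I would establish the between-slice transport at the level of a single stack. For each $j \in V(H) \setminus \{r_H\}$, set $d_j := D_H(j, r_H)$. A $2^{d_j}$-stack anchored at a vertex $(i, j) \in G_j$ can be pushed along a shortest $H$-path from $j$ to $r_H$ entirely within the $H$-slice $H_i$, consuming exactly $2^{d_j}$ pebbles from the stack and depositing a single pebble at $(i, r_H) \in G_{r_H}$. Performing this move for every stack counted by $stack_{G, j, d_j}$, over all $j \neq r_H$, leaves a configuration on $G_{r_H}$ of size at least
\[
\tilde{c}_{G, r_H} + \sum_{j \in V(H)\setminus\{r_H\}} stack_{G, j, d_j}.
\]

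Next I would close the argument by invoking the pebbling number. If \ref{B2} is violated, this accumulated count is at least $\pi(G)$. Since $G_{r_H}$ is isomorphic to $G$ and the root of the original problem $(r_G, r_H)$ sits inside $G_{r_H}$, by the very definition of $\pi(G)$ there is a sequence of within-$G_{r_H}$ pebbling moves placing a pebble at $(r_G, r_H)$. Hence the initial configuration is solvable, so it cannot belong to $\mathscr{U}$, which establishes the inequality.

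The only subtle point is justifying that the transports from distinct stacks can be carried out simultaneously, i.e.\ that $stack_{G, j, d_j}$ honestly lower-bounds a collection of disjoint stacks whose transports do not interfere. Distinct stacks occupy distinct vertices, so their pebble sets are disjoint; each transport occurs inside a single $H$-slice using only the pebbles of its own stack; and we place no requirement that the delivered pebbles end up on distinct vertices of $G_{r_H}$, since we only need the aggregate count on $G_{r_H}$ to hit $\pi(G)$. Consequently the transports may be executed sequentially in any order without conflict, which is all the proof requires.
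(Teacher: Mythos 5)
Your proof is correct and follows essentially the same route as the paper's (much terser) argument: each $2^{D_{\bar K}(j,r_{\bar K})}$-stack in a non-root slice executes one $d$-hop move into $K_{r_{\bar K}}$, and a violated constraint then yields at least $\pi(K)$ pebbles in that slice, enough to reach the root within it. One small nit: distinct stacks need not occupy distinct vertices (a vertex carrying $2^{d+1}$ pebbles houses two $2^{d}$-stacks), but the pigeonhole count defining $stack_{K,j,d}$ already guarantees the stacks' pebble sets are disjoint, so your non-interference conclusion stands as written.
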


\begin{proof}  The variable $stack_{K,j,d}$ provides a lower bound on the number of $d$-hop moves that are possible from $K_j$.  If the constraint is violated, enough pebbles can reach $K_{r_{\bar{K}}}$ to complete a $K$-set.\qed
\end{proof}

%\daphne{Need to add II.B.3($\alpha$) constraints, which pebble along a path of $G$-slices to build a set in $G_{r_H}$. This constraint does not use support sizes or $k$-stacks, but I am putting it in Level II because it requires us to select a specific root vertex.  (I'm open to discussion about this going in Level I vs Level II.)  (This constraint is currently called I.A.4($\alpha = 3$) in the model file.)} 

Constraint \ref{B2} requires that each stack is used to pebble directly to the target slice, and does not allow for the collection of ``loose'' pebbles along the way. The next constraint, \ref{B3}, allows for this possibility along a path (of $K$-slices) of length $\alpha$ terminating at $K_{r_{\bar{K}}}$.  

\begin{theorem} For $K \in \{G,H\}$, let $P$ be a path in $\bar{K}$ of edge length $\alpha \in \mathcal{D}_{\bar{K}}$, with $r_{\bar{K}} = p_0 \sim_P p_1 \cdots \sim_P p_\alpha$. Then, the inequality
\begin{equation} \tag{B.3($K,P$)}\label{B3} 
\begin{cases} 
 1+ \displaystyle \sum_{i=1}^\alpha 2^{\alpha - i} (\tilde{c}_{K,p_i} - |K|)  ~\leq~& 2^\alpha (\pi(K) - \tilde{c}_{K,r_{\bar{K}}}) \\ \quad &+ 2^\alpha \cdot M \left(\alpha - \displaystyle \sum_{i=1}^{\alpha} x_{K,p_i,1}\right) 
  \end{cases}
\end{equation}
is valid for $\mathscr{U}$.
\end{theorem}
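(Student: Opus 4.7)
The plan is to exhibit a solving strategy for any configuration violating \ref{B3}: sequentially pebble along $P$ from $K_{p_\alpha}$ toward the root slice $K_{p_0}=K_{r_{\bar{K}}}$, accumulating enough pebbles there to complete a $K$-set, which can then pebble the root $(r_G,r_H)$ within $K_{p_0}$. WLOG I take $K:=G$ by the symmetry of $G\Osq H$.

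First, I dispose of the degenerate case where some intermediate slice is not $1$-saturated. In that case some $x_{K,p_i,1}=0$, so $\alpha-\sum_{i=1}^{\alpha}x_{K,p_i,1}\geq 1$, and the right-hand side absorbs a term of at least $2^\alpha M$, which by the choice $M=2\pi(G)\pi(H)$ dominates the left-hand side; the constraint is trivially valid. So I may assume every $K_{p_i}$ with $1\leq i\leq\alpha$ satisfies $\tilde{c}_{K,p_i}\geq |K|$.

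The heart of the argument is a weight/pigeonhole estimate. Assign weight $2^{-i}$ to each pebble currently residing at $K_{p_i}$; a move along $P$ from $K_{p_i}$ to the corresponding vertex of $K_{p_{i-1}}$ is weight-preserving, since $2\cdot 2^{-i}=2^{-(i-1)}$. At any slice holding $y$ pebbles spread across $|K|$ vertices, the adversary can force at most $|K|$ of them to remain isolated as singletons, so at least $\lceil(y-|K|)/2\rceil$ pairs can always be extracted for transfer to the adjacent slice. Processing slices from $i=\alpha$ down to $i=1$, an inductive/telescoping calculation yields that at least $\sum_{i=1}^{\alpha} 2^{-i}(\tilde{c}_{K,p_i}-|K|)$ pebbles are delivered into $K_{p_0}$: the halving reflects weight-preservation across each hop, while the $-|K|$ accounts for one pigeonhole loss at each slice touched.

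Finally, when \ref{B3} is violated, the integer inequality $\sum_{i=1}^{\alpha} 2^{\alpha-i}(\tilde{c}_{K,p_i}-|K|)\geq 2^\alpha\bigl(\pi(K)-\tilde{c}_{K,p_0}\bigr)$ divides through to $\sum_{i=1}^{\alpha} 2^{-i}(\tilde{c}_{K,p_i}-|K|)\geq \pi(K)-\tilde{c}_{K,p_0}$; combined with the $\tilde{c}_{K,p_0}$ pebbles already in the root slice, this yields at least $\pi(K)$ pebbles in $K_{p_0}$, completing a $K$-set that then pebbles $(r_G,r_H)$. The main obstacle I anticipate is making the iterated pigeonhole bound fully rigorous, specifically ensuring that pebbles arriving at an intermediate slice from further out combine with the local pebbles for the next extraction without compounding additional $|K|$-size losses beyond the one already charged at that slice. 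I expect this to resolve cleanly by induction on the distance from $p_\alpha$, noting that ceilings dominate their divisions so no rounding deficit accumulates downstream, and that the $+1$ on the left-hand side of \ref{B3} absorbs the single integer gap between the real-valued weight sum and the integer count of pebbles actually delivered.
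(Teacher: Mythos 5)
Your proof is correct and follows essentially the same route as the paper's: both funnel pebbles slice-by-slice along $P$ toward $K_{r_{\bar{K}}}$, using the $1$-saturation assumption and the pigeonhole bound that a slice holding $y$ pebbles on $|K|$ vertices can forward at least $(y-|K|)/2$ of them to the next slice. The paper packages the telescoping computation as an induction on $\alpha$ (showing the shortened constraint remains violated after transferring pebbles from $K_{p_\alpha}$), while your weight-function bookkeeping computes the same quantity $\sum_{i=1}^{\alpha}2^{-i}(\tilde{c}_{K,p_i}-|K|)$ in closed form; the rounding concern you flag resolves exactly as you predict, since the per-slice loss of $|K|$ is incurred once at each slice regardless of how incoming and local pebbles are distributed among its vertices.
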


\begin{proof} 
WLOG, let $K := G$. If not all $K$-slices corresponding to the vertices of the path are $1$-saturated, then the rightmost term is positive and the constraint is relaxed. Hence, we can assume that each $K$-slice on the path is $1$-saturated.

We proceed by induction on $\alpha$, the edge-length of the path. Consider the base case, $\alpha = 1$. We will show that if the constraint is violated, there is a strategy that pebbles the root. If the constraint is violated, $K_{p_1}$ has at least $2(\pi(K) - \tilde{c}_{K,r_{\bar{K}}})+|K| - 1$ pebbles, of which $2(\pi(K) - \tilde{c}_{K,r_{\bar{K}}})$ of them can be used to add $\pi(K) - \tilde{c}_{K,r_{\bar{K}}}$ pebbles to $K_{r_{\bar K}}$. Combined with the existing $\tilde{c}_{K,r_{\bar{K}}}$ pebbles on $K_{r_{\bar K}}$, there are at least $\pi(K)$ pebbles to pebble the root.

Now, assume the constraint is valid when $\alpha = n-1$; we will show that if the constraint is violated when $\alpha = n$, there is a strategy that pebbles the root. Specifically, we will show that there are sufficient many pebbles on $K_{p_n}$ that can be moved to $K_{p_{n-1}}$ to pebble the root via the induction assumption.

 If the constraint is violated for $\alpha = n$, then
 \[ \displaystyle \sum_{i=1}^n 2^{n - i} (\tilde{c}_{K,p_i} - |K|) - 2^n (\pi(K) - \tilde{c}_{K,r_{\bar{K}}})   ~\geq~  0 . \]

Also, slice $K_{p_n}$ has at least 
\[  2^{n} (\pi(K) - \tilde{c}_{K,r_{\bar{K}}}) - \sum_{i=1}^{n-1} 2^{n - i} (\tilde{c}_{K,p_i} - |K|) + (|K| - 1) \]
pebbles.
Using the fact that that $K_{p_n}$ is 1-saturated, the pebbles on the slice $K_{p_n}$ in excess of $|K| - 1$ can be used to place half as many pebbles on the slice $K_{p_{n-1}}$. Hence, 
\[ 2^{n-1} (\pi(K) - \tilde{c}_{K,r_{\bar{K}}}) - \sum_{i=1}^{n-1} 2^{n - i - 1} (\tilde{c}_{K,p_i} - |K|) \]
pebbles can be placed onto slice $K_{p_{n-1}}$. The resulting configuration, $c'$ has 
\begin{align*}
\sum_{i=1}^{n-1} 2^{n - i} &(\tilde{c}'_{K,p_i} - |K|) - 2^n (\pi(K) - \tilde{c}'_{K,r_{\bar{K}}})  \\
&= \quad \sum_{i=1}^{n-1} 2^{n - i} (\tilde{c}_{K,p_i} - |K|) - 2^n (\pi(K) - \tilde{c}_{K,r_{\bar{K}}}) \\ &\quad\quad\quad+ 2^{n-1} (\pi(K) - \tilde{c}_{K,r_{\bar{K}}}) - \sum_{i=1}^{n-1} 2^{n - i - 1} (\tilde{c}_{K,p_i} - |K|) \\
&=\quad \sum_{i=1}^{n-1} 2^{(n-1) - i} (\tilde{c}_{K,p_i} - |K|) - 2^{n-1} (\pi(K) - \tilde{c}_{K,r_{\bar{K}}})  \\
 & \geq 0.
% && 2^{n-1} (\pi(K) - \tilde{c}_{K,r_{\bar{K}}}) - \left(\sum_{i=1}^{n-1} 2^{n - i - 1} (\tilde{c}_{K,p_i} - |K|) \right) 
\end{align*}
The first equality follows from $\tilde{c}' = \tilde{c}$ for all slices $K_{p_0},  \ldots, K_{p_{n-2}}$ except $\tilde{c}'_{p_{n-1}}$ is $\tilde{c}_{p_{n-1}}$ plus the number of pebbles moved onto it from $K_{p_n}$. The second equality follows by combining the like terms, and the final inequality follows from the assumption that the constraint is violated.

Hence, the new configuration has
\[ \sum_{i=1}^{n-1} 2^{n - i} (\tilde{c}'_{K,p_i} - |K|) - 2^n (\pi(K) - \tilde{c}'_{K,r_{\bar{K}}}) \ge 0 \]
over slices corresponding to a path of $n-1$ vertices. Using the induction hypothesis, there is a strategy to pebble the root. \qed
\end{proof}

\subsection{Variable-defining Constraints}\label{sec:variableconstraints}
For completeness, we list the constraints that manage the behavior of the variables, inserting commentary for a few interesting cases.  Refer to Section \ref{sec:variables} for descriptions of the variables, including numeric types.

\subsubsection{Partial-configuration variables}

The first two variable constraints manage the behavior of the variables that describe partial configurations with respect to each choice of frame graphs.  The full-configuration variables, $c_{i,j}$, ensure consistency between the partial configurations defined from each perspective. 

\begin{align}
\tilde{c}_{G,j} &~=~ \displaystyle\sum_{i \in V(G)} c_{i,j}, ~~&\mbox{ for } j \in V(H); \\
\tilde{c}_{H,i} &~=~ \displaystyle\sum_{j \in V(H)} c_{i,j}, ~~&\mbox{ for } i \in V(G).  
\end{align}
\medskip

\subsubsection{Sets, pairs, and saturation}
The constraints in this section are mostly straightforward.  We note only that the denominators in \ref{ylower} and \ref{xlower} are chosen so that the fraction never exceeds one.  All constraints in this section are defined for $K \in \{G,H\}$ and $j \in V(\bar{K})$.

The variables related to sets and pairs are modeled as:
\begin{align}
\tilde{c}_{K,j} &~=~ \pi(K) \cdot set_{K,j} + extra_{K,j};  \\
extra_{K,j} &~\leq~ \pi(K) - 1; \\
pair_{K,j} &~\leq~  \frac{extra_{K,j}}{2}; \\
pair_{K,j} &~\geq~ \frac{extra_{K,j} - 1}{2}; \\
  y_{K,0} &~=~ 1; \\
  y_{K,s} &~\leq~ \frac{\sum_{j \in \bar{K}} set_{K,j}}{s}, & \mbox{ for } s \in S_K \backslash \{0\}; \\
  y_{K,s} &~\geq~ \frac{\left(\sum_{j \in \bar{K}} set_{K,j}\right) - s + 1}{\pi(\bar{K})}, & \mbox{ for } s \in S_K \backslash \{0\}. \label{ylower}
\end{align}

The variables related to saturation-levels are modeled as:
\begin{align}
sat_{K,j} &~\leq~ \frac{\tilde{c}_{K,j}}{|K|}; \\
sat_{K,j} &~\geq~ \frac{\tilde{c}_{K,j} - |K| + 1}{|K|};\\
  x_{K, j, 0} &~=~ 1; \\
  x_{K, j, t} &~\leq~ \frac{sat_{K,j}}{t}, & \mbox{ for }  t \in \mathcal{T}_K \backslash \{0\}; \\
  x_{K, j, t} &~\geq~ \frac{sat_{K,j} - t + 1}{|\mathcal{T}_K|+1},  & \mbox{ for }  t \in \mathcal{T}_K \backslash \{0\}; \label{xlower}.
\end{align}

\subsubsection{Stacks}

The $support_{K,j}$ variables are required to describe the $stack_{K,j,\ell}$ variables.  
\begin{align}
covered_{i,j} &~\leq~ c_{i,j},  & \mbox{ for } (i,j) \in V(G \Osq H); \\
covered_{i,j} &~\geq~ \frac{c_{i,j}}{M}, & \mbox{ for } (i,j) \in V(G \Osq H); \\ 
support_{G,j} &~=~ \displaystyle\sum_{i \in V(G)} covered_{i,j}, & \mbox{ for } j \in V(H); \\
support_{H,i} &~=~ \displaystyle\sum_{j \in V(H)} covered_{i,j}, & \mbox{ for } i \in V(G).
\end{align}

The constraints that define the $stack_{K,j,\ell}$ variables require more care.   The logic of these constraints rests on the fact that if all of the $2^{\ell}$-stacks of pebbles are removed from the support vertices, $(2^{\ell} - 1)(support_{K,j}-1)$ is the maximum possible number of leftover pebbles.  However, the upper bound based on this logic, \ref{stackUpper}, fails if $\tilde{c}_{K,j} < (2^{\ell} - 1)(support_{K,j} -1)$, in which case $stack_{K,j,\ell}$ should be 0.  The variable $goodStack_{K,j,\ell}$ serves as a binary indicator for $\tilde{c}_{K,j} \geq (2^{\ell}-1)(support_{K,j} -1)$.  With this binary indicator, we can impose the alternate upperbound, \ref{stackUpper2}, when appropriate. The following constraints are indexed over $K \in \{G,H\}$, $j \in V(\bar{K})$, and $\ell \in \mathcal{D}_{\bar{K}}$.

\begin{align}
2^{\ell}\cdot stack_{K,j,\ell} &~\leq~ \tilde{c}_{K,j} - (2^{\ell}-1)(support_{K,j} -1) \label{stackUpper} \\
&\quad\quad + M(1 - goodStack_{K,j,\ell}); \nonumber  \\
2^{\ell}\cdot stack_{K,j,\ell} &~\leq~ M(goodStack_{K,j,\ell}); \label{stackUpper2} \\
2^{\ell}\cdot stack_{K,j,\ell} &~\geq~ \tilde{c}_{K,j} - (2^{\ell}-1)(support_{K,j}); \label{stackLower}\\
goodStack_{K,j,\ell} &~\leq~ 1 + \frac{\tilde{c}_{K,j} - (2^{\ell}-1)(support_{K,j} -1)}{2\pi(G)\pi(H)}; \\
goodStack_{K,j,\ell} &~\geq~ \frac{\tilde{c}_{K,j} - (2^{\ell}-1)(support_{K,j} -1)}
{2\pi(G)\pi(H)}.
\end{align}

\subsection{2-pebbling}
The variables $supportLess_{K,j,s}$ and $supportLess_{K,j,s}$ are used to define \\ $supportIs_{K,j,s}$, which, in turn, is required for the special case support sizes of 2-pebbling numbers and monotonic 2-pebbling numbers.  The index sets $U_K$ and $U_K^{mon}$ are indicators for these special case support sizes.

The following constraints are defined for $K \in \{G,H\}$, $j \in V(\bar{K})$, and $s \in (U_K \cup U_K^{mon}) \backslash \{0\}$.
\begin{align}
\textcolor{black}{supportLess_{K,j,s}} &\textcolor{black}{~\leq~ (|K| - support_{K,j}+ 1)/(|K| - s+1);}\\
%supportLess_{K,j,s} &~\geq~ (s - 1 - support_{K,j})/|K| \\
\textcolor{black}{supportLess_{K,j,s}} &\textcolor{black}{~\geq~ (s + 1 - support_{K,j})/|K|;} \\
%supportMore_{K,j,s} &~\leq~ (support_{K,j})/(s-1) \\
%\textcolor{blue}{supportMore_{K,j,s}} &\textcolor{blue}{~\leq~ (support_{K,j})/s} \nonumber\\
\textcolor{black}{supportMore_{K,j,s}} &\textcolor{black}{~\leq~ (support_{K,j}+1)/(s+1);} \\
%supportMore_{K,j,s} &~\geq~ (support_{K,j} - s - 2)/|K| \\
\textcolor{black}{supportMore_{K,j,s}} &\textcolor{black}{~\geq~ (support_{K,j} - s + 1)/|K|;}  \\
supportIs_{K,j,s} &~\leq~ supportMore_{K,j,s}; \\
supportIs_{K,j,s} &~\leq~ supportLess_{K,j,s}; \\
supportIs_{K,j,s} + 1 &~\geq~ supportMore_{K,j,s} + supportLess_{K,j,s}. 
\end{align}
The last 3 constraints enforce that $supportIs_{K,j,s} = 1$ if and only if\\ $supportMore_{K,j,s} = supportLess_{K,j,s} = 1$. 
% To avoid division by zero, we define $supportIs_{K,j,0}$ as special cases, due to  
%
%\begin{align}
%	supportIs_{K,j,0} &~\geq~ 1 - support_{K,j} \\
%	supportIs_{K,j,0} &~\leq~ 1 - \frac{support_{K,j}}{|K|}
%\end{align}

 Now, we are ready to define the 2-pebbling variables for $K \in \{G,H\}$ and $j \in V(\bar{K})$.  Our default values for both $n2peb_{K,j}$ and  $n2peb_{K,j}^{mon}$ match the formula for the bounds for 2-pebblable graphs:  $pi_K - support_{K,j} + 1$. The $supportIs_{K,j,s}$ variables allow us to correct the 2-pebbling numbers for any support sizes that deviate from this default value.
\begin{align}
n2peb_{K,j} &~=~ 2\pi(K) - support_{K,j} + 1 + \sum_{s \in U_K} difference_{K,s} supportIs_{K,j,s}; \\
n2peb_{K,j}^{mon} &~=~ 2\pi(K) - support_{K,j} + 1 + \sum_{s \in U_K^{mon}} difference_{K,s}^{mon} supportIs_{K,j,s}.
\end{align}

With $n2peb_{K,j}$ and $n2peb_{K,j}^{mon}$, we can define the behavior of the rest of the variables related to 2-pebbling.  These constraints are also defined for $K \in \{G,H\}$ and $j \in V(\bar{K})$.

\begin{align}
	M(can2peb_{K,j}) &~\geq~ \tilde{c}_{K,j} - n2peb_{K,j} + 1; \\
      M(1 - can2peb_{K,j}) &~\geq~ n2peb_{K,j} - \tilde{c}_{K,j};\\
	nroot_{K,j} &~\geq~ 2 (can2peb_{K,j}) + \frac{\tilde{c}_{K,j} - n2peb_{K,j} + 1}{\pi(K)} -1; \label{nroot_lbound} \\
%	nroot_{K,j} &~\geq~ \frac{\tilde{c}_{K,j} + 1}{\pi(K)} - 1 \\
	nroot_{G,j} &~\geq~ \frac{\tilde{c}_{G,j} - c_{r_G,j} + 1}{\pi(G)} - 1 + c_{r_G,j}, \quad \mbox{ for } j \in V(H); \label{nroot_lbound_G} \\
	nroot_{H,i} &~\geq~ \frac{\tilde{c}_{H,i} - c_{i,r_H} + 1}{\pi(H)} - 1 + c_{i,r_H}, \quad \mbox{ for } i \in V(G). \label{nroot_lbound_H}
\end{align}

\noindent Constraint \ref{nroot_lbound} gives credit for 2-pebbling if possible, then charges the regular 1-pebbling price for each pebble after the first two.  Bounds \ref{nroot_lbound_G} and \ref{nroot_lbound_H} give credit for any pebbles already sitting at the root node, then charges the regular 1-pebbling price for any additional pebbles to reach the root node.  The value of $nroot_{K,j}$ is determined by the maximum of the bounds applied to it.  For example, constraint \ref{nroot_lbound} is dominated by \ref{nroot_lbound_G} (or \ref{nroot_lbound_H}) whenever $\tilde{c}_{K,i} < n2peb_{K,i}$.
 
The ``$-1$'' on the right side of \ref{nroot_lbound} prevents over-counting when $(\tilde{c}_{K,i} - n2peb_{K,j})/\pi(K)$ is not a whole number.  The ``$+1$'' inside the parenthesis offsets the ``$-1$'' when $(\tilde{c}_{K,i} - n2peb_{K,j})/\pi(K)$ is a whole number.  \ref{nroot_lbound_G} and \ref{nroot_lbound_H}  behave similarly.

\section{IP-based Algorithm} \label{sec.algorithm} \label{sec:alg}
To verify a bound on $\pi(G \Osq H)$, it must be verified for every choice of root node.    We hope to avoid solving the IP to optimality for all $|G||H|$ possible root nodes because, depending on the performance of the IP, this could take a very long time.
To simplify this process, we make note of the following:
\begin{itemize}
\item For symmetric base graphs $G$, like $K_n$ or $K_{n,n}$, we only need to examine one node as root.  Without loss of generality, we choose $r_G := 1$.
\item If we have established that the maximum value for the model is $N$ for some root node, we may terminate examination of another root node when the upper bound provided by branch-and-bound (obtained from the IP solver) is less than $N$.
\item For our selection of base graphs $G$ and $H$, we label the vertices so that we expect the maximum bound to occur when the root is $(1, 1)$.  Typically, this case will need to be solved to optimality.
\end{itemize}

This leads us to Algorithm \ref{alg.algorithm1}, in which we explore the possible root nodes with the IP using a decreasing relative optimality gap in successive iterations.  Because the IP (as described in Section \ref{sec.model}) solves much more quickly for even small non-zero optimality gaps, we can quickly reduce the problem by applying the second rule above.  In Algorithm \ref{alg.algorithm1}, $OPT(r,gap)$ returns an ordered pair $(n, u)$ for root node $r$ and optimality gap $gap$.  Here, $n$ is the maximum objective value found by the solver for a feasible integer solution, and $u$ is the current upper bound on the actual optimal solution found by the solver.  We will always have $n <= u$ for any value of $gap$, with $n = u$ when $gap=0$.
%\begin{enumerate}
%\item Find the set "roots" of root nodes to investigate.
%\item Solve root node (1, 1) to optimality.  From this we establish a maximum N.
%\item Set the optimality gap to 10% for the first iteration, 5% for the second iteration, and 0 for the final iteration.
%\item For each root node in roots, if this is the first iteration for this root or the known MIP upper bound is greater than N, solve the MIP with the given optimality gap.  If the best integer solution produces and objective value M that is greater than N, set N to M.
%\item If all upper bounds are less than or equal to N, the solution to the model is N and the upper bound on pi(G x H) is N+1.
%\end{enumerate}

\begin{algorithm}[h!] \label{alg.algorithm1}
\SetAlgoLined
\KwResult{Bound on $\pi(G \Osq H)$.}
 $GAP := (0.1, 0.05, 0)$\;
 $\mathcal{S} := V(G \Osq H)$\;
 $r_0 := (1,1)$\;
 $(N, u_{r_0}) := OPT(r_0,0)$\;
 \While{$\mathcal{S} \ne \emptyset$}{
  $gap := GAP[i]$\;
  $s := \emptyset$\;
  \For{$r$ in $\mathcal{S}$}{
     \If{($u_r$ is undefined) or ($u_{r_0} > N$)}{
     $(n,u_r) := OPT(r,gap)$\;
        \If{$n > N$}{
	  $N := n$\;
        }
        \If{$u_r > N$}{
	  $s := s \cup \{r\}$\;
        }     
     }
   }{
   $\mathcal{S} := s$\;
   $i := i+1$\;
  }
 }
 \caption{IP-Based Algorithm for Bounding $\pi(G \Osq H)$}
\end{algorithm}

\section{Computations}\label{sec:computations}

\subsection{Test cases}

While this study was originally motivated by generating new upper bounds for $\pi(L \Osq L)$, our method may be applied to bound $\pi(G \Osq H)$ for general {base graphs} $G$ and $H$. %We will restrict our focus to particular pairings of graphs. 
One challenge in applying the model to a base graph $G$ is that the values of $\pi(G)$, let alone  $\pi_2(G, s)$, may not be known exactly. Hence, we restrict our computational testing to base graphs for which sufficient information is known about $\pi(G)$ and $\pi_2(G, s)$. %It is worth noting that the model only requires {\it upper bounds} for $\pi_2(G, s)$ and $\pi_G$ as the parameters within the model use $\pi_2(G, s)$ and $\pi_G$ to implement particular strategies with the requisite number of pebbles. If $\pi_2(G, s)$ and/or $\pi_G$ are overestimated, then the model simply expends more pebbles than necessary to implement these strategies. Hence, if $G$ has the 2-pebbling property, we can apply the model with $\pi_G(s)$ as $2\pi_G-s+1$ even though the true value of $\pi(G,s)$  may be lower. (For instance, for the path graph on $n$ vertices $\pi_2(P_n,n-1) = 2^{n-1} + n  \le 2 \cdots 2^{n-1} = 2 \pi(P_n)$). 
As a result, we focus on graphs that have the 2-pebbling property and whose value for $\pi_G$ is known, graphs for which Graham's conjecture is true in some cases, and/or  graphs without the two-pebbling property whose values for $\pi_2(G, s)$ are known.

In particular, we test all possible combinations of product graphs composed of the base graphs listed in Table \ref{tab.graphs}.  This results in 105 test cases, because we allow $G = H$, and because $G \Osq H = H \Osq G$.  

Our test graphs are sorted by number of vertices and by whether or not they have the 2-pebbling property.  Our ``small graphs'' have 7 or 8 vertices, since they are comparable to the Lemke graph in this respect. To better understand the computational viability and accuracy of the model as the size of the input graphs grow, we include a similar family of ``large graphs'' on 11 or 12 vertices.  For variety, our selected graphs have a wide range of pebbling numbers (from $8$ to $2^{12}$). 

\begin{table}[h!] \label{tab.basegraphs}
\centering
\begin{tabular}{ | r c | c |  c  |  c  |  c  |  }
\hline
& &$|V(G)|$ & $|E(G)|$ &$\pi(G)$ & 2PP?  \\ \hline
Original Lemke Graph & $L$ & 8 & 13 &8 & No \\
New Lemke Graph  \#1 & $L_1$ &8 & 12 &8 & No  \\
New Lemke Graph \#2  & $L_2$ & 8 &14 & 8& No \\ \hline
7-cycle & $C_{7}$ & 7& 7 & 11 & Yes \\
8-cycle &  $C_{8}$ & 8  &  8 & 16 & Yes \\
8-vertex Path & $P_{8}$ &8 &7 &$2^{8}$&Yes  \\
Complete Bipartite (4,4) & $K_{4,4}$ & 8 & 16&  12& Yes  \\
Complete (8) & $K_{8}$ &8 & 28 &8&Yes  \\
%Random Trees* & $T_{8}$  & 8&&&Yes 
\hline
11-cycle & $C_{11}$ & 11 & 11 & 43  & Yes \\
12-cycle &  $C_{12}$ & 12 & 12 & 64 & Yes \\
12-vertex Path & $P_{12}$ &12  & 11 &$2^{12}$&Yes \\
Complete Bipartite (6,6) & $K_{6,6}$ & 12 & 36 &  12& Yes  \\
Complete (12) & $K_{12}$ &12 & 66 &12&Yes \\
%Random Trees* & $T_{12}$ & 12 &&&Yes
\hline
\end{tabular}

\caption{Base graphs used to construct products $G \Osq H$ for computational testing.}
\label{tab.graphs}
\end{table}

First, we include three Lemke graphs, each on 8 vertices.  ``The'' Lemke graph, $L$, shown in Figure \ref{fig.L}, was the first example of a graph without the 2-pebbling property. Any graph without the 2-pebbling property is now called ``a'' Lemke graph. There are 22 distinct Lemke graphs on 8 vertices \cite{newlemke}, with three of them minimal as subgraphs. We focus on the three minimal cases: $L$, along with the other two minimal Lemke graphs, $L_1$ and $L_2$, as shown in Figure \ref{fig.newlemkes}. The pebbling numbers of the new Lemke graphs match the original Lemke graph, $\pi(L_1) = \pi(L_2) = \pi(L) = 8$, and the the 2-pebbling numbers match as well: $\pi_2(L_1,s) = \pi_2(L_2,s) = \pi_2(L,s)$ for $s = 1, 2, \ldots, 8$, with $\pi(L_1,5) = \pi(L_2,5) = 14$. Because of their similarity to the Lemke graph and their significance with respect to Graham's conjecture, we use them to test the adaptability of our model.

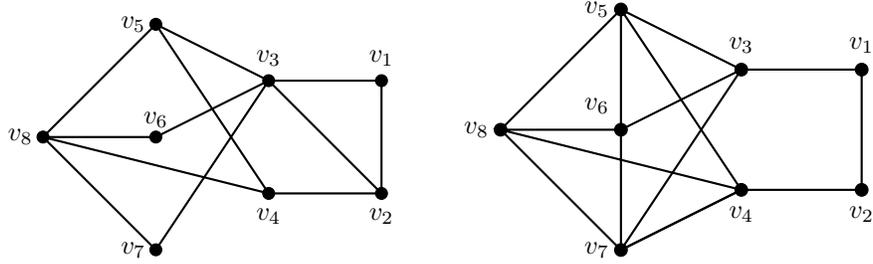
\begin{figure}
%\franklin{2x check the tikz figures one more time; I'm going crazy}
%\includegraphics[width=0.45\textwidth]{L1} \quad\quad \includegraphics[width=0.45\textwidth]{L2}
\centering
\noindent\begin{tikzpicture}[scale=0.75]
%% vertices
\draw[fill=black] (0,0) circle (3pt);
\draw[fill=black] (2,0) circle (3pt);
\draw[fill=black] (2,2) circle (3pt);
\draw[fill=black] (2,-2) circle (3pt);
\draw[fill=black] (4,1) circle (3pt);
\draw[fill=black] (4,-1) circle (3pt);
\draw[fill=black] (6,1) circle (3pt);
\draw[fill=black] (6,-1) circle (3pt);
%% vertex labels
\node at (-0.4,0) {$v_8$};
\node at (1.6,2) {$v_5$};
\node at (2,0.3) {$v_6$};
\node at (1.6,-2) {$v_7$};
\node at (4,1.4) {$v_3$};
\node at (4,-1.4) {$v_4$};
\node at (6,1.4) {$v_1$};
\node at (6,-1.4) {$v_2$};
%%% edges
\draw[thick] (0,0) -- (2,0) -- (4,1) -- (6,1) -- (6,-1) -- (4,-1); 
\draw[thick] (2,-2) -- (0,0);  
\draw[thick] (0,0) -- (2,2) -- (4,1);
\draw[thick] (2,2) -- (4,-1) -- (0,0);
\draw[thick] (2,-2) -- (4,1) ;
\draw[thick] (4,1) -- (6,-1) ;
\end{tikzpicture}\quad\quad
\begin{tikzpicture}[scale=0.8]
%% vertices
\draw[fill=black] (6,1) circle (3pt);
\draw[fill=black] (6,-1) circle (3pt);
\draw[fill=black] (4,1) circle (3pt);
\draw[fill=black] (4,-1) circle (3pt);
\draw[fill=black] (2,2) circle (3pt);
\draw[fill=black] (2,-2) circle (3pt);
\draw[fill=black] (2,0) circle (3pt);
\draw[fill=black] (0,0) circle (3pt);
%% vertex labels
\node at (1.6,0.4) {$v_6$};
\node at (-0.4,0) {$v_8$};
\node at (1.6,2) {$v_5$};
\node at (1.6,-2) {$v_7$};
\node at (4,1.4) {$v_3$};
\node at (4,-1.4) {$v_4$};
\node at (6,1.4) {$v_1$};
\node at (6,-1.4) {$v_2$};
%%% edges
\draw[thick] (0,0) -- (2,2) -- (4,1) -- (6,1) -- (6,-1) -- (4,-1) -- (2,-2) -- (0,0);
\draw[thick] (2,0) -- (2,-2) -- (4,-1) -- (2,2) -- (2,0) -- (0,0) -- (4,-1);
\draw[thick]  (2,-2) -- (4,1) -- (2,0);
\end{tikzpicture}
\caption{As shown in \cite{newlemke}, the graphs $L_1$ (left) and $L_2$ (right), along with the original Lemke graph, $L$, are all of the minimal graphs on 8 vertices without the 2-pebbling property.}
\label{fig.newlemkes}
\end{figure}

For the rest of our base graphs, we select representatives from very specific families of graphs for which the individual pebbling numbers are known and/or Graham's conjecture is known to be true. If $G$ has the 2-pebbling property, then $\pi(G \Osq H) \le \pi(G) \pi(H)$ whenever $H$ is an even cycle \cite{herscovici2003graham}, a tree \cite{gctrees}, a complete graph \cite{chung1989pebbling}, or a complete bipartite graph \cite{gcknm}. Further, the value of $\pi(G)$ for each of these graphs is known and they are known to have the 2-pebbling property themselves (see \cite{chung1989pebbling,gcknm,herscovici2003graham,gctrees}).  It is also known that odd cycles have the 2-pebbling property and that the product of two odd cycles obeys Graham's conjecture \cite{herscovici2003graham}.  Even though the Lemke graph has unusual properties when it comes to pebbling, it is known that Graham's conjecture is true when $G$ is the Lemke graph and $H$ is a complete graph or a tree \cite{gao2017lemke}.

\subsection{Computing environment}

Tests were run on a Macbook Pro (15-inch, 2017) with a 2.8GHz Intel Core i7 processor and 16 GB 2133 MHz LPDDR3 of memory.  Software included Python 3.63, Pyomo 5.61, and Gurobi 8.1.0.

\subsection{Computational results}

The results of our computations can be found in Tables \ref{tab.lemkelemke}--\ref{tab.bigbig}. There are three numbers provided for each test instance.  
\begin{itemize}
	\item The top number is the upper bound for $\pi(G \Osq H)$ attained by our model.  
	\item The middle number (in parenthesis) is $\pi(G)\pi(H)$, the
upper bound for $\pi(G \Osq H)$ suggested by Graham's conjecture.  We indicate the current state of thet conjectured bound by how it is displayed. For each instance, $\pi(G)\pi(H)$ is in {\bf bold} if Graham's conjecture has been verified, it is {\it italicized} if Graham's conjecture has not been verified, and it is \underline{underlined} if the upper bound is (or would be) best possible.  (For example, we know that $\pi(L \Osq L) \geq 64 = \pi(L)^2$ because $|V(L \Osq L)| = 64$.) 
	\item The bottom number reports the run-time of the algorithm.
\end{itemize}
It is worth noting that Graham's conjecture suggests only an upper bound for the pebbling number of the graph product. In some cases, the pebbling number of the graph product may be less than the upper bound proposed by Graham's conjecture. 

Results are organized as follows.  Tables \ref{tab.lemkelemke}, \ref{tab.lemkesmall}, and \ref{tab.lemkebig} display results in which at least one of the base graphs is a Lemke graph.  For most of these cases, Graham's conjecture has not been resolved.

In Tables \ref{tab.smallsmall}, \ref{tab.smallbig} and \ref{tab.bigbig}, all base graphs are non-Lemke graphs.  Paths, cycles, complete graphs, and complete bipartite graphs are all represented.  In most of these test cases, Graham's conjecture has been resolved. In many, the bound supplied by Graham's conjecture is tight. Note the missing data in the case of, $P_{12} \Osq P_{12}$.  The model failed on this instance due to insufficient memory.

Within each set of three tables of results, the tables are further organized by sizes of the non-Lemke base graphs:  ``small'' refers to a graph on 7 or 8 vertices, and ``large'' refers to a graph on 11 or 12 vertices.

%Tables \ref{tab.lemkecomp} and \ref{tab.lemkecomp2} display upper bounds obtained by our model for graph products involving Lemke graphs. 

\begin{table}[h!]
\centering
\noindent {\renewcommand{\arraystretch}{4}}
\begin{tabular}{ | c | c  |  c  |  c | }
\hline
& $L$ & $L_1$ & $L_2$ %& $C_{7}$ & $C_{8}$ & $P_{8}$ & $K_{4,4}$ & $K_{8}$ 
%$T_{8}$ &
%$C_{11}$ &$C_{12}$ &$P_{12}$ &$K_{6,6}$ & $K_{12}$ %& $T_{12}$  
\\ \hline 
$L$ & 
\shortstack[c]{85 \\ \underline{\it (64)} \\ 897.0s } & % L, L 
\shortstack[c]{85 \\ \underline{\it (64)} \\ 374.0s } & % L_1, L 
\shortstack[c]{84 \\ \underline{\it (64)} \\ 1209.9s } % L_2, L 
%\shortstack[c]{108 \\ {\it (88)} \\ 29.0 } & % C_7, L 
%\shortstack[c]{152 \\ \underline{\it (128)} \\ 23.5s }  & % C_8, L 
%\shortstack[c]{1043 \\ \underline{\bf (1024)} \\ 132.9s } & % P_8, L 
%\shortstack[c]{64 \\ \underline{\it (64)} \\ 3.8s } & % K_{4,4}, L 
%\shortstack[c]{64 \\ \underline{\bf (64)} \\ 5.5s }   % K_8, L 
% & % T_8, L 
%\shortstack[c]{? \\ {\it (344)} \\ 9.999s }  & % C_11, L 
%\shortstack[c]{? \\ {\it (512)} \\ 9.999s } & % C_12, L 
%\shortstack[c]{? \\ {\bf (32768)} \\ 9.999s}  & % P_12, L 
%\shortstack[c]{? \\ \underline{\it (96)} \\ 9.999s}  & % K_66, L 
%\shortstack[c]{? \\ \underline{\bf (96)} \\ 9.999s}   % K_12, L 
% % T_12, L 
 \\ \hline 
 
$L_1$ &
& %
\shortstack[c]{84 \\ \underline{\it (64)} \\ 635.5s } & % L_1, L 1
\shortstack[c]{84 \\ \underline{\it (64)} \\ 665.1s } % L_2, L1 
%\shortstack[c]{108 \\ {\it (88)} \\ 9.999s } & % C_7, L 1
%\shortstack[c]{152 \\ \underline{\it (128)} \\ 24.6s }  & % C_8, L 1
%\shortstack[c]{1043 \\ \underline{\it (1024)} \\ 133.0s } & % P_8, L 1 
%\shortstack[c]{64 \\ \underline{\it (64)} \\ 4.0s } & % K_{4,4}, L 1
%\shortstack[c]{64 \\ \underline{\it (64)} \\ 5.2s }  % K_8, L 1
% & % T_8, L 1
%\shortstack[c]{? \\ {\it (344)} \\ 9.999s }  & % C_11, L 1
%\shortstack[c]{? \\ {\it (512)} \\ 9.999s } & % C_12, L 1
%\shortstack[c]{? \\ {\it (32768)} \\ 9.999s}  & % P_12, L 1
%\shortstack[c]{? \\ \underline{\it (96)} \\ 9.999s}  & % K_66, L 1
%\shortstack[c]{? \\ \underline{\it (96)} \\ 9.999s}  % K_12, L 1
% % T_12, L 1
 \\ \hline  
 $L_2$ &
& %
& %
\shortstack[c]{84 \\ \underline{\it (64)} \\ 1183.1s } % L_2, L2
%\shortstack[c]{107 \\ {\it (88)} \\ 10.8 } & % C_7, L2
%\shortstack[c]{150 \\ \underline{\it (128)} \\ 8.8s }  & % C_8, L2
%\shortstack[c]{1041 \\ \underline{\it (1024)} \\ 12.3s } & % P_8, L2
%\shortstack[c]{64 \\ \underline{\it (64)} \\ 3.8s } & % K_{4,4}, L2
%\shortstack[c]{64 \\ \underline{\it (64)} \\ 5.1s }  % K_8, L2
%% & % T_8, L2
%\shortstack[c]{? \\ {\it (344)} \\ 9.999s }  & % C_11, L2
%\shortstack[c]{? \\ {\it (512)} \\ 9.999s } & % C_12, L2
%\shortstack[c]{? \\ {\it (32768)} \\ 9.999s}  & % P_12, L2
%\shortstack[c]{? \\ \underline{\it (96)} \\ 9.999s}  & % K_66, L2
%\shortstack[c]{? \\ \underline{\it (96)} \\ 9.999s}  % K_12, L2
% % T_12, L2  
\\ \hline  
\end{tabular}
\caption{$G$ and $H$ are both Lemke graphs. 
}
\label{tab.lemkelemke}
\end{table}

\begin{table}[h!]
\centering
\noindent {\renewcommand{\arraystretch}{4}}
\begin{tabular}{ | c  |  c| c|c|c|c|}
\hline
& $C_{7}$ & $C_{8}$ & $P_{8}$ & $K_{4,4}$ & $K_{8}$ 
%$T_{8}$ &
%$C_{11}$ &$C_{12}$ &$P_{12}$ &$K_{6,6}$ & $K_{12}$ %& $T_{12}$  
\\ \hline 
$L$ & 
%\shortstack[c]{85 \\ \underline{\it (64)} \\ 897.0s } & % L, L 
%\shortstack[c]{85 \\ \underline{\it (64)} \\ 374.0s } & % L_1, L 
%\shortstack[c]{84 \\ \underline{\it (64)} \\ 1209.9s }& % L_2, L 
\shortstack[c]{108 \\ {\it (88)} \\ 29.0 } & % C_7, L 
\shortstack[c]{152 \\ \underline{\it (128)} \\ 23.5s }  & % C_8, L 
\shortstack[c]{1043 \\ \underline{\bf (1024)} \\ 132.9s } & % P_8, L 
\shortstack[c]{64 \\ \underline{\it (64)} \\ 3.8s } & % K_{4,4}, L 
\shortstack[c]{64 \\ \underline{\bf (64)} \\ 5.5s }   % K_8, L 
% & % T_8, L 
%\shortstack[c]{? \\ {\it (344)} \\ 9.999s }  & % C_11, L 
%\shortstack[c]{? \\ {\it (512)} \\ 9.999s } & % C_12, L 
%\shortstack[c]{? \\ {\bf (32768)} \\ 9.999s}  & % P_12, L 
%\shortstack[c]{? \\ \underline{\it (96)} \\ 9.999s}  & % K_66, L 
%\shortstack[c]{? \\ \underline{\bf (96)} \\ 9.999s}   % K_12, L 
% % T_12, L 
 \\ \hline 
 
$L_1$ &
%& %
%\shortstack[c]{84 \\ \underline{\it (64)} \\ 635.5s } & % L_1, L 1
%\shortstack[c]{84 \\ \underline{\it (64)} \\ 665.1s }& % L_2, L1 
\shortstack[c]{108 \\ {\it (88)} \\ 9.999s } & % C_7, L 1
\shortstack[c]{152 \\ \underline{\it (128)} \\ 24.6s }  & % C_8, L 1
\shortstack[c]{1043 \\ \underline{\it (1024)} \\ 133.0s } & % P_8, L 1 
\shortstack[c]{64 \\ \underline{\it (64)} \\ 4.0s } & % K_{4,4}, L 1
\shortstack[c]{64 \\ \underline{\it (64)} \\ 5.2s }  % K_8, L 1
% & % T_8, L 1
%\shortstack[c]{? \\ {\it (344)} \\ 9.999s }  & % C_11, L 1
%\shortstack[c]{? \\ {\it (512)} \\ 9.999s } & % C_12, L 1
%\shortstack[c]{? \\ {\it (32768)} \\ 9.999s}  & % P_12, L 1
%\shortstack[c]{? \\ \underline{\it (96)} \\ 9.999s}  & % K_66, L 1
%\shortstack[c]{? \\ \underline{\it (96)} \\ 9.999s}  % K_12, L 1
% % T_12, L 1
 \\ \hline  
 $L_2$ &
%& %
%& %
%\shortstack[c]{84 \\ \underline{\it (64)} \\ 1183.1s }& % L_2, L2
\shortstack[c]{107 \\ {\it (88)} \\ 10.8 } & % C_7, L2
\shortstack[c]{150 \\ \underline{\it (128)} \\ 8.8s }  & % C_8, L2
\shortstack[c]{1041 \\ \underline{\it (1024)} \\ 12.3s } & % P_8, L2
\shortstack[c]{64 \\ \underline{\it (64)} \\ 3.8s } & % K_{4,4}, L2
\shortstack[c]{64 \\ \underline{\it (64)} \\ 5.1s }  % K_8, L2
%% & % T_8, L2
%\shortstack[c]{? \\ {\it (344)} \\ 9.999s }  & % C_11, L2
%\shortstack[c]{? \\ {\it (512)} \\ 9.999s } & % C_12, L2
%\shortstack[c]{? \\ {\it (32768)} \\ 9.999s}  & % P_12, L2
%\shortstack[c]{? \\ \underline{\it (96)} \\ 9.999s}  & % K_66, L2
%\shortstack[c]{? \\ \underline{\it (96)} \\ 9.999s}  % K_12, L2
% % T_12, L2  
\\ \hline  
\end{tabular}
\caption{$G :=$ Lemke; $H :=$ ``small'' non-Lemke.
}
\label{tab.lemkesmall}
\end{table}

\begin{table}[h!]
\centering
\noindent {\renewcommand{\arraystretch}{4}}
\begin{tabular}{ | c | c  |  c  |  c  |  c| c|c|c|c|c|c|c|c|c|c| c|}
\hline & $C_{11}$ &$C_{12}$ &$P_{12}$ &$K_{6,6}$ & $K_{12}$ %& $T_{12}$  
\\ \hline 
$L$ & 
\shortstack[c]{383 \\ {\it (344)} \\ 15.9s }  & % C_11, L 
\shortstack[c]{553 \\ {\it (512)} \\ 20.0s } & % C_12, L 
\shortstack[c]{16415 \\ {\bf (16384)} \\ 20.5s}  & % P_12, L 
\shortstack[c]{96 \\ \underline{\it (96)} \\ 46.7s}  & % K_66, L 
\shortstack[c]{96 \\ \underline{\bf (96)} \\ 4145.7s}   % K_12, L 
 % T_12, L 
 \\ \hline 
 
$L_1$ &
\shortstack[c]{389 \\ {\it (344)} \\ 17.1s }  & % C_11, L 1
\shortstack[c]{554 \\ {\it (512)} \\ 30.5s } & % C_12, L 1
\shortstack[c]{16416 \\ {\it (16384)} \\ 14.7s}  & % P_12, L 1
\shortstack[c]{96 \\ \underline{\it (96)} \\ 48.1s}  & % K_66, L 1
\shortstack[c]{96 \\ \underline{\it (96)} \\ 234.8s}  % K_12, L 1
 % T_12, L 1
 \\ \hline  
 $L_2$ &
\shortstack[c]{379 \\ {\it (344)} \\ 5.6s }  & % C_11, L2
\shortstack[c]{548 \\ {\it (512)} \\ 25.1s } & % C_12, L2
\shortstack[c]{16411 \\ {\it (16384)} \\ 13.0s}  & % P_12, L2
\shortstack[c]{96 \\ \underline{\it (96)} \\ 49.4s}  & % K_66, L2
\shortstack[c]{96 \\ \underline{\it (96)} \\ 247.1s}  % K_12, L2
 % T_12, L2  
\\ \hline  
\end{tabular}
\caption{$G :=$ Lemke; $H :=$ ``large'' non-Lemke.
}
\label{tab.lemkebig}
\end{table}

\begin{table}[h!]
\centering
\noindent {\renewcommand{\arraystretch}{4}}\begin{tabular}{ | c |  c| c|c|c|c|c|c|c|c|c|c| c|c|}
\hline
\renewcommand{\arraystretch}{1.3}
& $C_{7}$ & $C_{8}$ & $P_{8}$ & $K_{4,4}$ & $K_{8}$ %&$T_{8}$ &$C_{11}$ &$C_{12}$ &$P_{12}$ &$K_{6,6}$ & $K_{12}$ & $T_{12}$  
\\ \hline 
$C_{7}$ &
\shortstack[c]{140 \\ {\bf (121)} \\ 3.5s }  % C_7, C_7 
&&&&%&&&&&&
\\ \hline
$C_{8}$ &
\shortstack[c]{196 \\ {\bf (176)} \\ 4.5s } & % C_8, C_7
\shortstack[c]{278 \\ \underline{\bf (256)} \\ 12.1s }  % C_8, C_8
&&&%&&&&&&
\\ \hline
$P_{8}$ &
\shortstack[c]{1188 \\ {\bf (1408)} \\ 16.1s } & % P_8, C_7
\shortstack[c]{2063 \\ \underline{\bf (2048)} \\ 23.8s }& % P_8, C_8
\shortstack[c]{16399 \\ \underline{\bf (16384)} \\ 728.2s }   % P_8, P_8
&&%&&&&&&
\\ \hline
$K_{4,4}$ &
\shortstack[c]{76 \\ {\bf (88)} \\ 3.3s } & % K_{4,4}, C_7
\shortstack[c]{104 \\ {\bf (128)} \\ 3.0s } & % K_{4,4}, C_8
\shortstack[c]{562 \\ {\bf (1024)} \\ 16.8s } & % K_{4,4}, P_8
\shortstack[c]{64 \\ \underline{\bf (64)} \\ 5.6s }  % K_{4,4}, K_{4,4}
&%&&&&&&
\\ \hline
$K_{8}$ &
\shortstack[c]{67 \\ {\bf (88)} \\ 5.3s } & % K_8, C_7
\shortstack[c]{86 \\ {\bf (128)} \\ 5.2s } & % K_8, C_8
\shortstack[c]{311 \\ {\bf (1024)} \\ 28.0 s } & % K_8, P_8
\shortstack[c]{64 \\ \underline{\bf (64)} \\ 5.7s }  & % K_8, K_{4,4}
\shortstack[c]{64 \\ \underline{\bf (64)} \\ 7.0s }   % K_8, K_8
%&&&&&&
\\ \hline

\end{tabular}
\caption{$G$ and $H$ are both ``small'' non-Lemke graphs.
}
\label{tab.smallsmall}
\end{table}

\begin{table}[h!]
\centering \renewcommand{\arraystretch}{3}
\begin{tabular}{ | c |  c| c|c|c|c|c|c|c|c|c|c| c|c|}
\hline
& $C_{11}$ &$C_{12}$ &$P_{12}$ &$K_{6,6}$ & $K_{12}$  % $T_{12}$ 
 \\ \hline 
$C_{7}$ &
\shortstack[c]{491 \\ {\bf (473)} \\ 3.8s }  & % C_11, C_7 
\shortstack[c]{712 \\ {\bf (704)} \\ 4.8s }  & % C_12, C_7
\shortstack[c]{16636 \\ {\bf (22528)} \\ 15.9s}  & % P_12, C_7
\shortstack[c]{106 \\ {\bf (132)} \\ 49.0s}  & % K_66, C_7
\shortstack[c]{95 \\ {\bf (132)} \\ 49.5s}  % K_12, C_7 
\\ \hline $C_{8}$ &
\shortstack[c]{721 \\ {\bf (688)} \\ 6.2s }  & % C_11, C_8
\shortstack[c]{1060 \\ \underline{\bf (1024)} \\ 16.7s }  & % C_12, C_8
\shortstack[c]{32797 \\ \underline{\bf (32768)} \\ 332.9s}  & % P_12, C_8
\shortstack[c]{140 \\ {\bf (192)} \\ 73.7s}  & % K_66, C_8
\shortstack[c]{118 \\ {\bf (192)} \\ 304.1s}  % K_12, C_8
\\ \hline $P_{8}$ &
\shortstack[c]{4975 \\ {\bf (5504)} \\ 253.8s }  & % C_11, P_8
\shortstack[c]{8217 \\ \underline{\bf (8192)} \\ 365.3s }  & % C_12, P_8
\shortstack[c]{262164 \\ \underline{\bf (262144)} \\ 144.9s}  & % P_12, P_8
\shortstack[c]{611 \\ {\bf (1536)} \\ 455.8s}  & % K_66, P_8
\shortstack[c]{343 \\ {\bf (1536)} \\ 437.6s} % K_12, P_8
\\ \hline $K_{4,4}$ &
\shortstack[c]{240 \\ {\bf (344)} \\ 5.8s } & % C_11, K_{4,4}
\shortstack[c]{331 \\ {\bf (512)} \\ 5.1s } & % C_12, K_{4,4}
\shortstack[c]{8267 \\ {\bf (16384)} \\ 123.9s} & % P_12, K_{4,4}
\shortstack[c]{96 \\ \underline{\bf (96)} \\ 48.9s}  & % K_66, K_{4,4}
\shortstack[c]{96 \\ \underline{\bf (96)} \\ 45.0s}  % K_12, K_{4,4}
\\ \hline $K_{8}$ &
\shortstack[c]{162 \\ {\bf (344)} \\ 4.9s } & % C_11, K_8
\shortstack[c]{211 \\ {\bf (512)} \\ 5.2s } & % C_12, K_8
\shortstack[c]{4179 \\ {\bf (16384)} \\ 139.5s} & % P_12, K_8
\shortstack[c]{96 \\ \underline{\bf (96)} \\ 44.6s} & % K_66, K_8
\shortstack[c]{96 \\ \underline{\bf (96)}  \\ 47.3s}   % K_12, K_8
\\ \hline % T_12, T_12
\end{tabular}
\caption{$G :=$ ``small'' non-Lemke; $H :=$ ``large'' non-Lemke.
}
\label{tab.smallbig}

\end{table}

\begin{table}[h!]
\centering \renewcommand{\arraystretch}{3}
\begin{tabular}{ | c |  c| c|c|c|c|c|c|c|c|c|c| c|c|}
\hline
& $C_{11}$ &$C_{12}$ &$P_{12}$ &$K_{6,6}$ & $K_{12}$  % $T_{12}$ 
 \\ \hline 
$C_{11}$ &
\shortstack[c]{1908 \\ {\bf (1849)} \\ 56.0s }  & % C_11, C_11
&&& 
\\ \hline $C_{12}$ &
\shortstack[c]{2804 \\ {\bf (2752)} \\ 66.3s } & % C_12, C_11 4096
\shortstack[c]{4158 \\ \underline{\bf (4096)} \\ 76.4s}    % C_12, C_12
&&&\\ \hline
$P_{12}$ &
\shortstack[c]{66873 \\ {\bf (88064)} \\ 72.8s} & % P_12, C_11
\shortstack[c]{131110 \\ \underline{\bf (131072)} \\ 183.6s} & % P_12, C_12
 \shortstack[c]{-- \\ \underline{\bf (16777216)} \\ --} % P_12, P_12
&& \\ \hline
$K_{6,6}$ &
\shortstack[c]{306 \\ {\bf (516)} \\ 83.3s}  & % K_66, C_11
\shortstack[c]{406 \\ {\bf (768)} \\ 77.7s}  & % K_66, C_12
\shortstack[c]{8342 \\ {\bf (24576)} \\ 1621.8s}  & % K_66, P_12
\shortstack[c]{144 \\ \underline{\bf (144)} \\ 94.1s}   % K_66, K_66
&  \\ \hline
$K_{12}$ &
\shortstack[c]{206 \\ {\bf (516)}  \\ 271.9s}  & % K_12, C_11
\shortstack[c]{259 \\ {\bf (768)} \\ 282.5s} & % K_12, C_12
\shortstack[c]{4227 \\ {\bf (24576)} \\ 7280.6s}  & % K_12, P_12
\shortstack[c]{144 \\ \underline{\bf (144)} \\ 87.9s}  & % K_12, K_66
\shortstack[c]{144 \\ \underline{\bf (144)} \\ 487.7s} % K_12, K_12
\\ \hline % T_12, T_12
\end{tabular}
\caption{$G$ and $H$ are both ``large'' non-Lemke graphs.
}
\label{tab.bigbig}

\end{table}

\subsection{Discussion of results} \label{sec.disc}

Addressing first the primary motivation for our work, we found that $\pi(L \Osq L) \le 85$, which is an improvement over the previous upper bound of  $\pi(L \Osq L) \le 91$ discovered by an earlier version of our model (\cite{COCOApebbling}), and a vast improvement over the best known bound prior to our model, $\pi(L \Osq L) \le 108$ (\cite{linopt2011}).  While $85$ is still more than 32\% above the conjectured bound of $\pi(L \Osq L) = 64$, one must keep in mind that $L \Osq L$ is also a proposed counterexample to Graham's conjecture \cite{gross2013handbook}. Hence, the bound $\pi(L \Osq L) \le 85$ may be more accurate than it otherwise seems. Interestingly, the model attains nearly the same bounds (84 or 85) on products involving the three minimal Lemke graphs.

The model found the exact pebbling number for every test graph made up of a Lemke graph composed with either a complete graph, or a complete bipartite graph.  In each such case, the bound attained by the model is the best possible bound, so it must be $\pi(G \Osq H)$.  In \cite{gao2017lemke}, Gao and Yin proved that $\pi(L \Osq K_n) = \pi(L)\pi(K_n) = 8n$, which matches the bounds provided by our model for $\pi(L \Osq K_8)$ and $\pi(L \Osq K_{12})$.  The rest of the results listed in Proposition \ref{Lcomplete} are new.

\begin{prop} \label{Lcomplete} Let $G \in \{L, L_1, L_2\}$.  If $H \in \{K_8, K_{4,4}\}$, then \[ \pi(G \Osq H) = 64.\]  If $H \in \{K_{12}, K_{6,6}\}$, then \[\pi(G \Osq H) = 96. \]
\end{prop}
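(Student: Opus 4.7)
The plan is to prove Proposition \ref{Lcomplete} as an immediate consequence of two matching bounds: a trivial combinatorial lower bound, and an upper bound obtained from the IP-based algorithm already applied to the relevant instances. The substantive work has already been carried out in Tables \ref{tab.lemkesmall} and \ref{tab.lemkebig}; the proof simply marshals these pieces.

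First, I would establish the lower bound. For any connected graph $\Gamma$, placing a single pebble on each vertex other than the root yields an unsolvable configuration of size $|V(\Gamma)| - 1$, so $\pi(\Gamma) \geq |V(\Gamma)|$. Applied to $\Gamma = G \Osq H$ with $G \in \{L, L_1, L_2\}$ (each of order $8$), this gives $\pi(G \Osq H) \geq 8 \cdot 8 = 64$ when $H \in \{K_8, K_{4,4}\}$, and $\pi(G \Osq H) \geq 8 \cdot 12 = 96$ when $H \in \{K_{12}, K_{6,6}\}$.

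Next, I would invoke the matching upper bounds. Each pebbling constraint in Section \ref{sec:pebconstraints} is valid for the set $\mathscr{U}$ of unsolvable configurations, so the optimum of the IP plus one is a valid upper bound on the root-specific pebbling number for any chosen root $(r_G, r_H)$. Algorithm \ref{alg.algorithm1} iterates this bound over all choices of root node, so the reported maximum objective value plus one is a valid upper bound on $\pi(G \Osq H)$. Reading off the relevant entries of Tables \ref{tab.lemkesmall} and \ref{tab.lemkebig}, the computed upper bounds are exactly $64$ for each of the six products with $H \in \{K_8, K_{4,4}\}$ and exactly $96$ for each of the six products with $H \in \{K_{12}, K_{6,6}\}$. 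Combining with the lower bound gives the claimed equalities.

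The main obstacle—really the only one that is not purely bookkeeping—is the numerical issue flagged in the paper's footnote: Gurobi uses floating-point arithmetic, so in principle the reported optimal objective values require a careful sanity check to rule out off-by-one rounding artifacts. I would address this in one of two ways: either re-solve each of the twelve affected instances with an exact rational IP solver (the models are small enough for this to be feasible once the solution is known, since warm-starting from the floating-point optimum essentially reduces the task to certifying integer feasibility and optimality), or, for each instance, extract from the IP optimum an explicit partial configuration of size equal to $|G||H| - 1$ together with a winning pebbling strategy from every root, both of which can be verified by hand or by a simple combinatorial checker independent of Gurobi. Either route closes the gap and yields the proposition as stated.
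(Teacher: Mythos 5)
Your proof matches the paper's (implicit) argument exactly: the trivial lower bound $\pi(G \Osq H) \ge |V(G \Osq H)| = |G|\,|H|$, combined with the matching upper bounds computed by the IP-based algorithm and reported in Tables \ref{tab.lemkesmall} and \ref{tab.lemkebig}. Your extra discussion of certifying the Gurobi results against floating-point artifacts is a sensible precaution that the paper only acknowledges in a footnote, but it does not constitute a different approach.
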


\noindent Proposition \ref{Lcomplete} settles Graham's conjecture for 10 new Cartesian product graphs.
 
\begin{cor} Graham's conjecture holds for $G \Osq H$, where $G \in \{L, L_1, L_2\}$ and $H \in \{K_8,K_{12}, K_{4,4}, K_{6,6}\}$
\end{cor}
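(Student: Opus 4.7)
The plan is to derive this corollary as an immediate consequence of Proposition \ref{Lcomplete} by verifying the inequality $\pi(G \Osq H) \le \pi(G)\pi(H)$ in each of the twelve cases. Since Proposition \ref{Lcomplete} supplies the exact value of $\pi(G \Osq H)$ for every pair under consideration, no further combinatorial work is needed; the argument reduces to a table lookup of individual pebbling numbers followed by direct arithmetic.

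First, I would record the relevant pebbling numbers of the factor graphs. From Table \ref{tab.graphs} (and the discussion of the minimal Lemke graphs in the text), we have $\pi(L) = \pi(L_1) = \pi(L_2) = 8$, $\pi(K_8) = 8$, and $\pi(K_{12}) = \pi(K_{4,4}) = \pi(K_{6,6}) = 12$. This yields four values for the Graham product $\pi(G)\pi(H)$ as $G$ ranges over $\{L, L_1, L_2\}$ and $H$ ranges over $\{K_8, K_{12}, K_{4,4}, K_{6,6}\}$: namely $64$ for $H = K_8$, $96$ for $H \in \{K_{12}, K_{4,4}\}$, and $144$ for $H = K_{6,6}$.

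Next, I would invoke Proposition \ref{Lcomplete} in each case and compare. For $H \in \{K_8, K_{4,4}\}$, the proposition gives $\pi(G \Osq H) = 64$, while the Graham bound is $64$ and $96$ respectively, so $\pi(G \Osq H) \le \pi(G)\pi(H)$ holds (with equality when $H = K_8$). For $H \in \{K_{12}, K_{6,6}\}$, the proposition gives $\pi(G \Osq H) = 96$, while the Graham bound is $96$ and $144$ respectively, so again $\pi(G \Osq H) \le \pi(G)\pi(H)$ (with equality when $H = K_{12}$). Checking all twelve combinations exhausts the statement.

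There is no genuine obstacle here, as the corollary is essentially a bookkeeping consequence of Proposition \ref{Lcomplete}; the only thing to watch is that the proposition is applied to all three minimal Lemke graphs uniformly and that both bipartite and complete cases are accounted for. It is worth remarking in passing, as the surrounding text already does, that in the $K_{4,4}$ and $K_{6,6}$ cases the conclusion $\pi(G \Osq H) < \pi(G)\pi(H)$ is \emph{strict}, so Graham's bound is not tight there, whereas for $K_8$ and $K_{12}$ it is attained with equality.
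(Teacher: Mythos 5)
Your approach is exactly the paper's (the corollary is stated without proof as an immediate consequence of Proposition \ref{Lcomplete}, i.e., a table lookup plus arithmetic), but you have a numerical slip in the $K_{6,6}$ case. Since $\pi(G)=8$ for each minimal Lemke graph and $\pi(K_{6,6})=12$, the Graham bound there is $\pi(G)\pi(K_{6,6}) = 8\cdot 12 = 96$, not $144$; the value $144 = 12\cdot 12$ belongs to products of two complete or complete bipartite graphs (cf.\ Table \ref{tab.bigbig}), not to a Lemke--$K_{6,6}$ product. Consequently the comparison in that case is $96 \le 96$, an equality (consistent with the underlined $(96)$ entries in Table \ref{tab.lemkebig}, which indicate the bound is best possible since $|V(G \Osq K_{6,6})| = 96$), and your closing remark that the inequality is \emph{strict} for $K_{6,6}$ is wrong --- strictness occurs only for $K_{4,4}$, where $64 < 96$. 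The corollary itself is unaffected, since the correct inequality still holds in all twelve cases, but as written your verification for the $K_{6,6}$ cases checks the wrong bound.
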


\noindent These results provide compelling evidence that Graham's conjecture holds for the product of any minimal Lemke graph with any complete graph or complete bipartite graph.
%The following hold:
%\begin{align*}
%\pi(L \Osq K_8) = 64 & \quad &
%\pi(L_1 \Osq K_8) = 64 \\
%\pi(L_2 \Osq K_8) = 64 & \quad &
%\pi(L \Osq K_{4,4}) = 64 \\
%\pi(L_1 \Osq K_{4,4}) = 64 & \quad &
%\pi(L_2 \Osq K_{4,4}) = 64 \\
%\pi(L \Osq K_{12}) = 96 & \quad &
%\pi(L_1 \Osq K_{12}) = 96 \\
%\pi(L_2 \Osq K_{12}) = 96 & \quad &
%\pi(L \Osq K_{6,6}) = 96 \\
%\pi(L_1 \Osq K_{6,6}) = 96 & \quad &
%\pi(L_2 \Osq K_{6,6}) = 96 \\
%\end{align*}

%The results $\pi(L \Osq K_8) = 64$ and $\pi(L \Osq K_{12}) = 96$ were proved by . All of the exact computations involving $L_1$ and $L_2$ are new. Since $L, L_1$, and $L_2$ are all minimal Lemke graphs, this result also implies that $\pi(L^* \Osq K_8) = 64$ and $\pi(L^* \Osq K_{12}) = 96$ for {\it any} Lemke graph on 8 vertices.

Our results capture the phenomenon that $\pi(G \Osq H)$ may be substantially smaller that $\pi(G)\pi(H)$. While $\pi(G \Osq H) = \pi(G)\pi(H)$ for many well-understood cases (such as when $G$ and $H$ are both paths or they are both complete graphs), it is not trivial to show $\pi(G \Osq H)$ could be substantially smaller than $\pi(G)\pi(H)$. Our computations provide several interesting concrete examples of this occurrence, including the products of complete graphs and paths. While this phenomenon is not surprising, per se, it is surprising how small $\pi(G \Osq H)$ could be compared to $\pi(G)\pi(H)$. Indeed, the spread of bounds on $\pi(G \Osq H)$ attained by the model reinforces the difficulty of Graham's conjecture.

\section{Conclusion} \label{sec.conclusion}

We describe an IP-based algorithm that finds an upper bound on $\pi(G \Osq H)$, given base graphs $G$ and $H$, and provide the results of applying the algorithm to a variety of cross-product graphs. Our main approach is to leverage the underlying symmetry of graph products by encoding partial-pebbling strategies in our constraints.  Modeling at the level of slices within a frame graph, rather than at the indidual vertex level, allows us to eliminate large classes of pebbling strategies with each constraint.   

The IP-based algorithm finds an improved upper bound on $\pi(L \Osq L)$, and similar upper bounds for pebbling numbers of products involving the other two minimal Lemke graphs.  Yet, the algorithm did not reach the conjectured bound of $\pi(L)^2$ for $\pi(L \Osq L)$. One may interpret this as evidence (however weak) that Graham's conjecture may be false; however, it is likely more indicative of the extreme difficulty of the problem. 

In other cases, such as for  $\pi(L_1 \Osq K_{4,4})$, our algorithm attains exact values. Our IP-based algorithm adapts well to many different base graphs, and in several cases we bound the pebbling number close to the exact value, or in other cases, below the upper bound suggested by Graham's conjecture.

Our computational results inspire other theoretical questions.  For example, we demonstrate that some product graphs have pebbling numbers that fall substantially below the bound suggested by Graham's conjecture. This leads to the question: ``Is there a Graham-like {\it lower bound} for the pebbling number of two graphs?'' Further, while most of these product graphs involve a sparse graph (e.g., a path) and a dense graph (e.g., a complete graph), we were unable to find any examples where $\pi(G \Osq G)$ is substantially less than $\pi(G)^2$. It would be interesting to find such an example.

With regard to improving our model, it seems unlikely that our approach will lead to an exact bound on $\pi(L \Osq L)$.  However, the low computation times for many test cases suggest that marginal gains are within reach by including more pebbling constraints.   There are two likely avenues for finding constraints that will improve the current IP.  The first is to leverage results that are know about products of special classes of graphs. We could use these to bound the number of pebbles placed on product subgraphs of $G \Osq H$ that contain the root node. The second is to encode ``the next level'' partial-pebbling strategies. This would involve studying the bounding configuration supplied by the model for $L \Osq L$, for example, and using it to identify pebbling strategies that are not yet captured by constraints.

% ----------------------------------------------------------------
\section{References}

\bibliographystyle{siam}
\bibliography{../pebble.bib}

\end{document}